\newtheorem{theorem}{Theorem}
\newtheorem{algorithm}{Algorithm}
\newtheorem{corollary}{Corollary}
\newtheorem{definition}{Definition}
\newtheorem{lemma}{Lemma}
\newtheorem{remark}{Remark}
\begin{document}

\title[\small{\textit{Hybrid algorithms without the extra-steps for equilibrium problems}}]{\textbf{Hybrid algorithms without the extra-steps for equilibrium problems}}
\author[\small{\textit{D. V. Hieu\\/Vol.?, No.?, pp.?-? (2015) \copyright IJOCTA}}]{\normalsize Dang Van Hieu$^a$\vspace{0.58cm} %please don't remove \vspace{.3cm}
\\ \small
$^a$Department of Mathematics, Vietnam National University, Vietnam \\
334 Nguyen Trai Street, Thanh Xuan, Hanoi, Vietnam.\\
Email: dv.hieu83@gmail.com \vspace{0.1cm} \\
% $^b$Department of Mathematics Bal\i kesir University, Turkey \\
% Email: davci@balikesir.edu.tr \vspace{0.4cm} \\
\textit{(Received xx $xx, 2015$; in final form xx $xx, 2015$)}}
  
\thanks{Corresponding Author. Email: dv.hieu83@gmail.com}

\begin{abstract}
In this paper, we introduce some new hybrid algorithms for finding a solution of a system of equilibrium problems.
In these algorithms, by constructing specially cutting-halfspaces, we avoid using the extra-steps as in the extragradient method and the 
Armijo linesearch method which are inherently costly when the feasible set has a complex structure. The strong convergence of 
the algorithms is established.

\vspace{7pt}
\noindent
\textbf{Keywords: }{Hybrid method; extragradient method; equilibrium problem}

\vspace{2pt}
\noindent
\textbf{AMS Classification:} 65K10 . 65K15 . 90C33
\end{abstract}

\maketitle

\section{Introduction}\label{intro}
The paper focuses on the problem of finding a common solution of a system of equilibrium problems (CSEP), i.e., find $x^*\in C$ such that 
\begin{equation}\label{CSEP}
f_i(x^*,y)\ge 0,~\forall y\in C,~i=1,\ldots,N,
\end{equation}
where $f_i:C\times C\to \Re$ are bifunctions. The CSEP is very general in the sense that it includes, as special cases, many mathematical models as: 
convex feasibility problems, common fixed point problems, common solutions to variational inequality problems (CSVIP), common minimizer problems, 
common saddle point problems, variational inequalities 
over the intersection of closed convex subsets, common solutions of operator equations, 
see \cite{ABH2014,AH2014,AH2014b,BO1994,CH2005,CGRS2012,DGM2003,H2015,H2015a} and the references therein. 
In this paper, we introduce two hybrid algorithms for solving CSEPs, namely:
\begin{algorithm}\label{H2015a}
$$
\left \{
\begin{array}{ll}
y^i_{n+1}=  \underset{y\in C}{\rm argmin} \{ \lambda f_i(y^i_n, y) +\frac{1}{2}||x_n-y||^2\},\\
x_{n+1}=P_{C_n\cap Q_n}(x_0),
\end{array}
\right.
$$
\end{algorithm}
\noindent where $C_n\cap Q_n$ is the intersection of $N+1$ specially constructed halfspaces (see, Algorithm \ref{algor1} in Section \ref{main} below) and 
its modification 
\begin{algorithm}\label{H2015b}
$$
\left \{
\begin{array}{ll}
y^i_{n+1}=  \underset{y\in C}{\rm argmin} \{ \lambda f_i(\bar{y}_n, y) +\frac{1}{2}||x_n-y||^2\},\\
\bar{y}_{n+1}=\arg\max\left\{||y^i_{n+1}-x_n||:1\le i\le N\right\},\\
x_{n+1}=P_{C_n\cap Q_n}(x_0),
\end{array}
\right.
$$
\end{algorithm}
\noindent where $C_n,Q_n$ are two specially constructed half-spaces (see Algorithm $\ref{algor2}$ in Section $\ref{main}$ below). 

When CSEP consists of a single equilibrium problem (EP) for a bifunction $f:C\times C\to \Re$, i.e., find $x^*\in C$ such that
\begin{equation}\label{EP}
f(x^*,y)\ge 0,~\forall y\in C,
\end{equation}
then Algorithms \ref{H2015a} and \ref{H2015b} become the following hybrid algorithm.
\begin{algorithm}\label{H2015c}
$$
\left \{
\begin{array}{ll}
y_{n+1}=  \underset{y\in C}{\rm argmin} \{ \lambda f(y_n, y) +\frac{1}{2}||x_n-y||^2\},\\
x_{n+1}=P_{C_n\cap Q_n}(x_0),
\end{array}
\right.
$$
\end{algorithm}
\noindent where $C_n \cap Q_n$ is the intersection of two half-spaces. One of the most popular methods for monotone EPs is the 
proximal point method (PPM) \cite{K2000,M1970,R1976,TT2007,TandaTaka2007} in which solution approximations to EPs are based on the resolvent 
$J_r^f$ \cite{CH2005} of equilibrium bifunction $f$ as
$$J_r^f(x)=\left\{z\in H: f(z,y)+\frac{1}{r}\left\langle y-z,z-x \right\rangle \ge 0, ~ \forall y\in C\right\},$$
where $r>0$ and $x\in H$. For instance, in 2010, C. Jaiboon and P. Kumam \cite{JK2010} proposed the following hybrid algorithm for an EP 
\begin{equation}\label{JK2010}
\left \{
\begin{array}{ll}
u_n ~s.t~f(u_n,y)+\frac{1}{r}\left\langle y-u_n,u_n-x_n \right\rangle \ge 0, ~ \forall y\in C,\\
C_n=\left\{z\in C:||u_n-z||^2\leq ||x_n-z||^2\right\},\\
Q_n=\left\{z\in C: \left\langle x_0-x_n,z-x_n\right\rangle\le 0\right\},\\
x_{n+1}=P_{C_n\cap Q_n}(x_0).
\end{array}
\right.
\end{equation}
They proved that the sequence $\left\{x_n\right\}$ generated by $(\ref{JK2010})$ converges strongly to the projection of $x_0$ onto 
the solution set $EP(f,C)$ of EP. Note that when $f$ is monotone then $J_r^f$ is single-valued, 
strongly monotone and firmly nonexpansive (so nonexpansive). In 2008, the extragradient method \cite{K1976} for variational inequalities 
was extended to EPs in Euclidean spaces by the authors in \cite{QMH2008}. 
The extended extragradient method consists of solving two programs
\begin{equation}\label{QMH2008}
\left \{
\begin{array}{ll}
y_n=  \underset{y\in C}{\rm argmin} \{ \lambda f(x_n, y) +\frac{1}{2}||x_n-y||^2\},\\ 
z_n = \underset{y\in C}{\rm argmin}\{ \lambda f(y_n, y) +\frac{1}{2}||x_n-y||^2\}.\\
\end{array}
\right.
\end{equation}
After that it was further extended to Hilbert spaces \cite{A2011,A2012,DHM2014,H2015a,VSH2013,SVH2011}. In Hilbert spaces, 
the iterative process $(\ref{QMH2008})$, in general, is weakly convergent. 
In 2013, the authors 
in \cite{VSH2013} proposed the following hybrid algorithm for pseudomonotone EPs 
(see Algorithm 2 and Theorem 4.2 in \cite{VSH2013} with 
$T_n$ being the identity operator $I$ for all $n$).
\begin{equation}\label{VSH2013a}
\left \{
\begin{array}{ll}
x_0\in C,~C_0=C,\\
y_n=  \underset{y\in C}{\rm argmin} \{ \lambda f(x_n, y) +\frac{1}{2}||x_n-y||^2\},\\ 
z_n = \underset{y\in C}{\rm argmin}\{ \lambda f(y_n, y) +\frac{1}{2}||x_n-y||^2\},\\
C_{n}=\left\{z\in C:||z_n-z||^2\le||x_n-z||^2\right\},\\
Q_n=\left\{z\in C:\left\langle x_0-x_n,z-x_n\right\rangle\le 0\right\},\\
x_{n+1}=P_{C_n\cap Q_n}(x_0).
\end{array}
\right.
\end{equation}
And they proved the sequence $\left\{x_n\right\}$ generated by (\ref{VSH2013a}) converges strongly to $P_{EP(f,C)}(x_0)$. 
We see that in the extended extragradient methods \cite{A2011,A2012,DHM2014,H2015a,QMH2008,VSH2013,SVH2011}, 
two convex optimization programs must be solved at each iterative step. This seems to be costly if the feasible set $C$ has a 
complex structure and can affect seriously the efficiency of the used method. To avoid solving the second optimization program 
in the extended extragradient method and to reduce conditions posed on the equilibrium bifunction $f$, in 2014, the authors in 
\cite{DHM2014} replaced it by the Armijo linesearch technique and a projection on $C$, and introduced the following strongly convergence hybrid algorithm
\begin{equation}\label{DHM2014}
\left \{
\begin{array}{ll}
x_0\in C,\\
y_n=  \underset{y\in C}{\rm argmin} \{ \lambda f(x_n, y) +\frac{1}{2}||x_n-y||^2\},\\ 
m_n~ \mbox{\rm is the smallest integer number such that}\\
f( (1-\eta^{m_n})x_n+\eta^{m_n}y_n,y_n )+\frac{1}{2\lambda}||x_n-y_n||^2\le 0,\\
u_n=P_C(x_n-\sigma_n g_n),\\
C_{n}=\left\{z\in C:||u_n-z||^2\le||x_n-z||^2\right\},\\
Q_n=\left\{z\in C:\left\langle x_0-x_n,z-x_n\right\rangle\le 0\right\},\\
x_{n+1}=P_{C_n\cap Q_n}(x_0),
\end{array}
\right.
\end{equation}
where $g_n\in \partial f_2(z_n,z_n),z_n= (1-\eta^{m_n})x_n+\eta^{m_n}y_n$ and $\sigma_n=-\eta^{m_n}f(z_n,y_n)/(1-\eta^{m_n})||g_n||^2$. 
However, in the iterative algorithm (\ref{DHM2014}) we still have to perform the linesearch process for finding the smallest integer number $m_n$ 
and a projection onto $C$ for finding $u_n$. 

It is clear that Algorithm \ref{H2015c} does not use the PPM and the resolvent of the equilibrium bifunction $f$. Contrary to the extended 
extragradient method and Armijo linesearch method, in Algorithm \ref{H2015c}, only a convex optimization program is solved at each iterative 
step without the extra-steps. Moreover, the sets $C_n$ and $Q_n$ in Algorithm \ref{H2015c} (also, in Algorithm \ref{H2015b}) are two 
halfspaces. Thus, the next iterate $x_{n+1}=P_{C_n\cap Q_n}(x_0)$ can be expressed by an explicit formula, for instance, in \cite{CH2005,SS2000} 
while the sets $C_n$ and $Q_n$ in the iterative processes (\ref{JK2010}), (\ref{VSH2013a}) 
and (\ref{DHM2014}) deal with the feasible set $C$, and so $x_{n+1}$, in general, is not easy to find.

Some iterative algorithms for solving CSEPs can be found in \cite{AH2014,CH2005,H2015,H2015a} and the references therein. In this paper, 
motivated by the results in \cite{CGRS2012,MS2015} for variational inequalities, we propose Algorithms \ref{H2015a} and \ref{H2015b} for 
finding a common solution of equilibrium problems. We are emphasized that Algorithm \ref{H2015a} seems to be difficult to develop numerical 
methods because in order to find 
the next iterate $x_{n+1}$ we must solve a distance optimization program on the intersection of $N+1$ halfspaces. It is not easy to implement when 
the number of subproblems $N$ is large. However, whether it can help in the design and analysis of more practical algorithms remains to be seen. 
To overcome this complexity, we use a technical extension in \cite{AH2014b,H2015} and introduce Algorithm \ref{H2015b} as 
a modification of Algorithm $\ref{H2015a}$. In this 
algorithm, as mentioned above, the next approximation $x_{n+1}$ is defined as the projection on the intersection of two halfspaces and it is easily 
found by an explicit formula in \cite{CH2005,SS2000}.
%%%%%%%%%%%%%%%%%%%%%%%%%%%%%%%%%%%%%%%
%%%%%%%%%%%%%%%%%%%%%%%%%%%%%%%%%%%%%%%%%%%%%%%%%%
\section{Preliminaries}\label{pre}
%%%%%%%%%%%%%%%%%%%%%%%%%%%%%%%%%%%%%
In this section, we recall some definitions and preliminary results used in the paper. We begin with several concepts of the monotonicity of 
a bifunction and a operator (see, for instance \cite{BO1994,MO1992}).
\begin{definition} A bifunction $f:C\times C\to \Re$ is said to be
\begin{itemize}
\item [$\rm i.$] strongly monotone on $C$ if there exists a constant $\gamma>0$ such that
$$ f(x,y)+f(y,x)\le -\gamma ||x-y||^2,~\forall x,y\in C; $$
\item [$\rm ii.$] monotone on $C$ if 
$$ f(x,y)+f(y,x)\le 0,~\forall x,y\in C; $$
\item [$\rm iii.$] pseudomonotone on $C$ if 
$$ f(x,y)\ge 0 \Longrightarrow f(y,x)\le 0,~\forall x,y\in C;$$
\item [$\rm iv.$] Lipschitz-type continuous on $C$ if there exist two positive constants $c_1,c_2$ such that
$$ f(x,y) + f(y,z) \geq f(x,z) - c_1||x-y||^2 - c_2||y-z||^2, ~ \forall x,y,z \in C.$$
\end{itemize}
\end{definition}
From the definitions above, it is clear that  $i.\Longrightarrow ii. \Longrightarrow iii.$ 
\begin{definition}
An operator $A:C \to H$ is said to be
\begin{itemize}
\item [$\rm i.$] monotone on $C$ if $\left\langle A(x)-A(y),x-y\right\rangle\ge 0$, for all $x,y\in C$;
\item [$\rm ii.$] pseudomonotone on $C$ if
$$\left\langle A(x)-A(y),x-y\right\rangle\ge 0\Longrightarrow \left\langle A(y)-A(x),y-x\right\rangle\le 0$$
for all $x,y\in C$;
% \item [$\rm iii.$] $\alpha$ - inverse strongly monotone on $C$ if there exists a positive constant $\alpha$ such that
% $$
% \left\langle A(x)-A(y), x-y\right\rangle\ge \alpha||A(x)-A(y)||^2, \quad \forall x,y\in C;
% $$
% \item [$\rm iv.$] maximal monotone on $C$ if it is monotone and its graph
% $$ G(A):=\left\{(x,A(x)):x\in C\right\} $$
%  is not a proper subset of one of any other monotone mapping;
\item [$\rm iii.$] $L$ - Lipschitz continuous on $C$ if there exists a positive constant $L$ such that $||A(x)-A(y)||\le L||x-y||$ for all $x,y\in C$. 
\end{itemize}
\end{definition}
For solving CSEP $(\ref{CSEP})$ and EP $(\ref{EP})$, we assume that the 
bifunction $f:C\times C\to \Re$ satisfies the following conditions:
\begin{itemize}
\item[(A1).] $f$ is pseudomonotone on $C$ and $f(x,x)=0$ for all $x\in C$;
\item [(A2).]  $f$ is Lipschitz-type continuous on $C$;
\item [(A3).]   $f$ is weakly continuous on $C$;
\item [(A4).]  $f(x,.)$ is convex and subdifferentiable on $C$ for each $x\in C$.
\end{itemize}  
%%%%%%%%%%%%%%%%%%%%%%%
It is easy to show that under the assumptions $\rm (A1)-A(4)$ then the solution set $EP(f,C)$ of EP (\ref{EP}) is closed and convex 
(see, for instance \cite{QMH2008}). Thus, the solution set $F=\cap_{i=1}^N EP(f_i,C)$ of CSEP (\ref{CSEP}) is also closed and convex. 
In this paper we assume that the solution set $F$ is nonempty.
% \begin{lemma}
% Under the assumptions $\rm (A1),(A2),(A4)$. Set $A(x)=\nabla_2 f(x,x)$ then
% \begin{itemize}
% \item [$\rm i.$] $A$ is monotone on $C$.
% \item [$\rm ii.$] $EP(f,C)=VI(A,C)$. 
% \end{itemize}\label{eq:}
% \end{lemma}
%%%%%%%%%%%%%%%%%%%%%%%%
% \begin{proof}
% It is easy to show...
% \end{proof}
%%%%%%%%%%%%%%%%%%%%%%%%%%%%

The metric projection $P_C:H\to C$ is defined by
$$
P_C(x)=\arg\min\left\{\left\|y-x\right\|:y\in C\right\}.
$$ 
Since $C$ is nonempty, closed and convex, $P_C(x)$ exists and is unique. It is also known that $P_C$ has the following characteristic properties, 
see \cite{GR1984} for more details.
\begin{lemma}\label{lem.PropertyPC}
Let $P_C:H\to C$ be the metric projection from $H$ onto $C$. Then
\begin{itemize}
\item [$\rm i.$] $P_C$ is firmly nonexpansive, i.e.,
\begin{equation*}\label{eq:FirmlyNonexpOfPC}
\left\langle P_C x-P_C y,x-y \right\rangle \ge \left\|P_C x-P_C y\right\|^2,~\forall x,y\in H.
\end{equation*}
\item [$\rm ii.$] For all $x\in C, y\in H$,
\begin{equation}\label{eq:ProperOfPC}
\left\|x-P_C y\right\|^2+\left\|P_C y-y\right\|^2\le \left\|x-y\right\|^2.
\end{equation}
\item [$\rm iii.$] $z=P_C x$ if and only if 
\begin{equation}\label{eq:EquivalentPC}
\left\langle x-z,z-y \right\rangle \ge 0,\quad \forall y\in C.
\end{equation}
\end{itemize}
\end{lemma}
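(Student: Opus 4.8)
The plan is to prove part (iii) first, since parts (i) and (ii) then follow by short algebraic manipulations. For the forward implication of (iii) I would use the standard variational argument: given $z = P_C x$ and any $y \in C$, convexity of $C$ puts $z_t := z + t(y-z)$ in $C$ for every $t \in [0,1]$, minimality of $z$ forces $\|x-z\|^2 \le \|x-z_t\|^2$, and expanding the right-hand side via $\|a+b\|^2 = \|a\|^2 + 2\langle a,b\rangle + \|b\|^2$ leaves $2\langle x-z, y-z\rangle \le t\|y-z\|^2$ for all $t \in (0,1]$; letting $t \downarrow 0$ gives $\langle x-z, z-y\rangle \ge 0$. For the converse I would expand $\|x-y\|^2 = \|x-z\|^2 + 2\langle x-z, z-y\rangle + \|z-y\|^2 \ge \|x-z\|^2$ to conclude that $z$ is the minimizer, hence equals $P_C x$ by the uniqueness already recorded in the text.

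Next, for (i) I would apply (iii) to the pair $(x, P_C x)$ with test point $P_C y \in C$ and to the pair $(y, P_C y)$ with test point $P_C x \in C$, obtaining $\langle x - P_C x,\, P_C x - P_C y\rangle \ge 0$ and $\langle y - P_C y,\, P_C y - P_C x\rangle \ge 0$; adding these two inequalities and rearranging yields $\langle x-y,\, P_C x - P_C y\rangle \ge \|P_C x - P_C y\|^2$.

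Finally, for (ii) I would set $z := P_C y$ and, for $x \in C$, apply (iii) with test point $x$ to get $\langle y-z,\, z-x\rangle \ge 0$, equivalently $\langle x-z,\, z-y\rangle \ge 0$ by symmetry of the inner product; then $\|x-y\|^2 = \|x-z\|^2 + 2\langle x-z, z-y\rangle + \|z-y\|^2 \ge \|x-z\|^2 + \|z-y\|^2$, which is exactly the stated inequality $\|x-P_C y\|^2 + \|P_C y - y\|^2 \le \|x-y\|^2$.

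There is no genuine obstacle here: these are classical facts (see \cite{GR1984}), and the existence and uniqueness of $P_C(x)$ — which rest on completeness of $H$ together with closedness and convexity of $C$ via the parallelogram identity — have already been granted above. The only step requiring a little care is the passage to the limit $t \downarrow 0$ in the forward half of (iii); everything else is routine bookkeeping with squared norms.
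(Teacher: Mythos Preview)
Your argument is correct and is precisely the standard proof of these classical projection properties. Note, however, that the paper itself does not prove this lemma at all: it is stated in the Preliminaries as a known result, preceded by the sentence ``It is also known that $P_C$ has the following characteristic properties, see \cite{GR1984} for more details,'' and no proof is supplied. So there is no paper proof to compare your proposal against; you have simply written out what the paper delegates to the reference, and your derivation of (iii) first via the variational argument, followed by (i) and (ii) as algebraic consequences, is exactly the route one finds in \cite{GR1984} and similar texts. One small point worth making explicit in the converse direction of (iii): the characterization tacitly assumes $z\in C$, which you use when concluding $z=P_C x$ from the minimality of $\|x-z\|$.
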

%%%%%%%%%%%%%%%%%%%%%%%%%%%%%%%%%%%%%%%
%%%%%%%%%%%%%%%%%%%%%%%%%%%%%
% \begin{definition}[Weakly lower semicontinuity]
% A function $\varphi: H\to \Re$ is called weakly lower semicontinuous at $x\in H$ if for any sequence $\left\{x_n\right\}$ in $H$ converges weakly to $x$ then 
% $$\varphi(x)\le \lim\inf_{n\to\infty}\varphi(x_n). $$
% \end{definition}
% It is well-known that the functional $\varphi(x):=||x||^2$ is convex and weakly lower semicontinuous.
%%%%%%%%%%%%%%%%%%%%%%%%%%%%
% 
% Any Hilbert space has the Kadec-Klee property (see, for instance \cite{GK1990}), i.e., if $\left\{x_n\right\}$ is a sequence in $H$ such that $x_n\rightharpoonup x$ 
% and $||x_n||\to ||x||$ then $x_n\to x$ as $n\to\infty$.
% 
%%%%%%%%%%%%%%%%%%%%%%%%%%%%%%%%%%%%%%%%
% We need the following lemma for analyzing the convergence of Algorithm $\ref{H2015}$.
%%%%%%%%%%%%%%%%%%%%%%%%%%%%%%%%%%%
The following lemma is an infinite version of Theorem 27.4 in \cite{R1970} and it is similarly proved.
%%%%%%%%%%%%%%%%%%%%%%%%%%%%%%%%%%%%
\begin{lemma}\label{lem.Equivalent_MinPro}
Let $C$ be a convex subset of a real Hilbert space H and $g:C\to \Re$ be a convex and subdifferentiable function on $C$. Then, 
$x^*$ is a solution to the following convex optimization problem
\begin{equation*}\min\left\{g(x):x\in C\right\}
\end{equation*}
if and only if  ~  $0\in \partial g(x^*)+N_C(x^*)$, where $\partial g(.)$ denotes the subdifferential of $g$ and $N_C(x^*)$ is the normal cone 
of  $C$ at $x^*$.
\end{lemma}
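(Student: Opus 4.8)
The plan is to prove the two implications separately. Writing the normal cone as $N_C(x^*)=\{w\in H:\langle w,x-x^*\rangle\le0\ \text{for all }x\in C\}$ (with $x^*\in C$ tacitly assumed on both sides), the ``if'' implication is a one‑line consequence of the defining inequalities, while the ``only if'' implication is the substantive part and carries the only infinite‑dimensional subtlety.

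For the ``if'' direction, suppose $0\in\partial g(x^*)+N_C(x^*)$ and write $0=v+w$ with $v\in\partial g(x^*)$ and $w\in N_C(x^*)$. The subgradient inequality gives $g(x)-g(x^*)\ge\langle v,x-x^*\rangle$ for every $x\in C$, and since $v=-w$ with $w\in N_C(x^*)$ we have $\langle v,x-x^*\rangle=-\langle w,x-x^*\rangle\ge0$; hence $g(x)\ge g(x^*)$ for all $x\in C$, so $x^*$ solves the minimization problem.

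For the ``only if'' direction, I would recast the constrained problem as the unconstrained minimization over $H$ of $\Phi:=g+\delta_C$, where $\delta_C$ is the (convex) indicator function of $C$. Then $x^*$ solves $\min\{g(x):x\in C\}$ precisely when $x^*$ is a global minimizer of $\Phi$, which, directly from the definition of the subdifferential, amounts to $0\in\partial\Phi(x^*)$; since a short computation from the definitions gives $\partial\delta_C(x^*)=N_C(x^*)$, everything reduces to the additivity $\partial(g+\delta_C)(x^*)=\partial g(x^*)+\partial\delta_C(x^*)$, i.e. the Moreau--Rockafellar sum rule. Equivalently, and closer to the proof of Theorem~27.4 in \cite{R1970}, one checks that optimality forces the one‑sided directional derivative $g'(x^*;x-x^*):=\lim_{t\downarrow0}t^{-1}\big(g(x^*+t(x-x^*))-g(x^*)\big)$ to be $\ge0$ for every $x\in C$ (each difference quotient is nonnegative since $x^*+t(x-x^*)\in C$, and the limit exists by convexity), and then, using that $g'(x^*;\cdot)$ is the support function of $\partial g(x^*)$, extracts by a minimax or separation argument a subgradient $v\in\partial g(x^*)$ with $\langle v,x-x^*\rangle\ge0$ for all $x\in C$, so that $-v\in N_C(x^*)$ and $0\in\partial g(x^*)+N_C(x^*)$.

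The hard part is precisely this last step. Neither the Moreau--Rockafellar sum rule nor the identification of $g'(x^*;\cdot)$ with the support function of $\partial g(x^*)$ (together with the boundedness of $\partial g(x^*)$ needed for the minimax step) is automatic in a general Hilbert space; all of it hinges on a constraint qualification, such as continuity of $g$ at a point of $C$, which plays here the role of the relative‑interior hypothesis in the finite‑dimensional statement. This is the adaptation concealed behind the phrase ``similarly proved'', and it is harmless in the applications below, where $g$ is finite‑valued and continuous on all of $H$. Granting this point, the two implications combine to give the asserted equivalence.
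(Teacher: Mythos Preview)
Your proposal is correct and matches the paper's approach: the paper gives no proof beyond the one-line remark that the lemma is an infinite-dimensional version of Theorem~27.4 in \cite{R1970} and is ``similarly proved'', and your directional-derivative/support-function argument is precisely that adaptation. Your flagging of the constraint-qualification issue hidden behind ``similarly proved'' is accurate and, as you note, immaterial in the paper's applications where $g$ is continuous on $H$.
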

%%%%%%%%%%%%%%%%%%%%%%%%%%%%
%%%%%%%%%%%%%%%%%%%%%%%%%%%%
We have the following technical lemma.
\begin{lemma}\cite{MS2015}\label{lem.technique}
Let $\left\{M_n\right\}$, $\left\{N_n\right\}$, $\left\{P_n\right\}$ be nonnegative real sequences, 
$\alpha,\beta \in \Re$ and for all $n\ge 0$ the following inequality holds
$$ M_n\le N_n+\beta P_n-\alpha P_{n+1}. $$
If $\sum_{n=0}^\infty N_n <+\infty$ and $\alpha>\beta\ge 0$ then $\lim_{n\to\infty}M_n=0$.
\end{lemma}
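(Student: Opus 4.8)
The plan is to convert the one-step inequality
\[
M_n\le N_n+\beta P_n-\alpha P_{n+1}
\]
into an estimate for the partial sums $\sum_{n=0}^{K}M_n$ and then read off the conclusion from the summability of $\{N_n\}$ together with the sign condition $\alpha>\beta\ge 0$. First I would fix $K\ge 0$ and add the hypothesis over $n=0,1,\dots,K$, obtaining
\[
\sum_{n=0}^{K}M_n\le\sum_{n=0}^{K}N_n+\sum_{n=0}^{K}\bigl(\beta P_n-\alpha P_{n+1}\bigr).
\]
The key observation is that the last sum is \emph{not} a telescoping sum when $\alpha\ne\beta$, but the discrepancy between consecutive coefficients has the right sign.

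Concretely, reindexing the second series gives
\[
\sum_{n=0}^{K}\bigl(\beta P_n-\alpha P_{n+1}\bigr)=\beta P_0+(\beta-\alpha)\sum_{n=1}^{K}P_n-\alpha P_{K+1}.
\]
Since $\alpha>\beta\ge 0$ in particular forces $\alpha>0$, and since each $P_n\ge 0$, both the term $(\beta-\alpha)\sum_{n=1}^{K}P_n$ and the term $-\alpha P_{K+1}$ are nonpositive; hence $\sum_{n=0}^{K}\bigl(\beta P_n-\alpha P_{n+1}\bigr)\le\beta P_0$. Feeding this back, and using $N_n\ge 0$ so that $\sum_{n=0}^{K}N_n\le\sum_{n=0}^{\infty}N_n<+\infty$, I would arrive at
\[
\sum_{n=0}^{K}M_n\le\sum_{n=0}^{\infty}N_n+\beta P_0\qquad\text{for every }K\ge 0.
\]

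To finish: because $M_n\ge 0$, the partial sums $\sum_{n=0}^{K}M_n$ are nondecreasing in $K$ and, by the last display, bounded above by a constant independent of $K$; therefore the series $\sum_{n=0}^{\infty}M_n$ converges, and in particular $M_n\to 0$. I expect the only delicate point to be the sign bookkeeping in the non-telescoping sum — that is exactly where all three assumptions ($\alpha>\beta$, $\beta\ge 0$, and $P_n\ge 0$ for all $n$) enter, with $\beta\ge 0$ serving only to guarantee $\alpha>0$ so that the boundary term $-\alpha P_{K+1}$ does not spoil the estimate. It is worth noting that this argument in fact yields the stronger statement $\sum_{n=0}^{\infty}M_n<+\infty$.
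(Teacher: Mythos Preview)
Your argument is correct: summing the hypothesis from $n=0$ to $K$, reindexing the $P$-terms, and using $\alpha>\beta\ge 0$ together with $P_n\ge 0$ does bound $\sum_{n=0}^K M_n$ by $\sum_{n=0}^\infty N_n+\beta P_0$, whence $\sum M_n<\infty$ and $M_n\to 0$. The sign bookkeeping is handled cleanly, and your remark that $\beta\ge 0$ is used only to force $\alpha>0$ (so that the boundary term $-\alpha P_{K+1}$ has the right sign) is accurate.

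Note, however, that the paper does \emph{not} supply its own proof of this lemma: it is quoted verbatim from Malitsky and Semenov \cite{MS2015} and used as a black box in the proof of Lemma~\ref{lem4}. So there is no ``paper's proof'' to compare against here. Your self-contained summation argument is exactly the standard one and would serve as a perfectly good proof were one required; the additional observation that in fact $\sum_{n=0}^\infty M_n<\infty$ is a genuine (if minor) strengthening of the stated conclusion.
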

%%%%%%%%%%%%%%%%%%%%%%%%%%%%%%%%%%%%%%%%%%%
%%%%%%%%%%%%%%%%%%%%%%%%%%%%%%%%%%%%%%%%%%%
\section{Convergence analysis}\label{main}
\setcounter{theorem}{0}
\setcounter{remark}{0}
\setcounter{corollary}{0}
\setcounter{algorithm}{0}
In this section, we rewrite our algorithms for more details and analyze their convergence.
\begin{algorithm}\label{algor1} \textbf{Initialization.} Chose $x_0=x_1 \in H, ~{y}^i_0={y}^i_1\in C$ and set $C_0=Q_0=H$. 
The parameters $\lambda,k$ satisfy the following conditions
$$0< \lambda <\frac{1}{2(c_1+c_2)},~ k>\frac{1}{1-2\lambda(c_1+c_2)}.$$
\textbf{Step 1.} Solve strongly convex optimization programs
 $$y^i_{n+1}=  \underset{y\in C}{\rm argmin} \{ \lambda f_i({y}^i_n, y) +\frac{1}{2}||x_n-y||^2\},~i=1,\ldots,N.$$
\textbf{Step 2.} Compute $x_{n+1}=P_{C_n\cap Q_n}(x_0),$
where $C_n=\cap_{i=1}^N C_n^i$ and
\begin{eqnarray*}
&&C_n^i=\left\{z\in H:||{y}^i_{n+1}-z||^2\leq ||x_n-z||^2+\epsilon_n^i \right\},\\
&&Q_n=\left\{z\in H: \left\langle x_0-x_n,z-x_n\right\rangle\le 0\right\},
\end{eqnarray*}
and $\epsilon_n^i=k||x_n-x_{n-1}||^2+2\lambda c_1||{y}^i_n-{y}^i_{n-1}||^2-(1-\frac{1}{k}-2\lambda c_2)||{y}^i_{n+1}-{y}^i_{n}||^2$. 
Set $n:=n+1$ and go back \textbf{Step 1.}
\end{algorithm}
%%%%%%%%%%%%%%%%%%%%%%%%%%
We have the following central lemma.
\begin{lemma}\label{lem1}
Let $\left\{x_n\right\},\left\{y_n^i\right\}$ be the sequences generated by Algorithm $\ref{algor1}$. 
Then, there hold the following relations for all $i$ and $n$
\begin{itemize}
\item [$\rm i.$] $\left\langle y_{n+1}^i-x_n,y-y_{n+1}^i\right\rangle\ge \lambda\left(f_i({y}^i_n,y_{n+1}^i)-f_i({y}^i_n,y)\right),~\forall y\in C.$
\item [$\rm ii.$]  $||y_{n+1}^i - x^*||^2\leq ||x_n-x^*||^2+\epsilon_n^i$ for all $x^*\in F$.
\end{itemize}
\end{lemma}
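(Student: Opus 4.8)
The plan is to read (i) off the first-order optimality condition for the strongly convex subproblem that defines $y^i_{n+1}$, and then to obtain (ii) by feeding the instances of (i) at indices $n$ and $n-1$ into the Lipschitz-type inequality (A2), using the pseudomonotonicity of $f_i$ at a point $x^*\in F$, and finally collapsing all the resulting mixed terms into $\epsilon^i_n$ by two elementary Hilbert-space identities and the inequality $2\langle a,b\rangle\le k\|a\|^2+\frac{1}{k}\|b\|^2$.

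For (i): the map $y\mapsto\lambda f_i(y^i_n,y)+\frac{1}{2}\|x_n-y\|^2$ is convex and subdifferentiable on $C$ by (A4), so Lemma \ref{lem.Equivalent_MinPro} gives $0\in\lambda\,\partial f_i(y^i_n,\cdot)(y^i_{n+1})+(y^i_{n+1}-x_n)+N_C(y^i_{n+1})$. Hence there are $w$ in the subdifferential of $f_i(y^i_n,\cdot)$ at $y^i_{n+1}$ and $\bar w\in N_C(y^i_{n+1})$ with $x_n-y^i_{n+1}=\lambda w+\bar w$. For $y\in C$ one has $\langle\bar w,y-y^i_{n+1}\rangle\le0$ by definition of the normal cone and $\langle w,y-y^i_{n+1}\rangle\le f_i(y^i_n,y)-f_i(y^i_n,y^i_{n+1})$ by the subgradient inequality, and combining the two yields (i). I will also use the special case $y=y^i_n$: since $f_i(y^i_n,y^i_n)=0$ by (A1), this gives $\langle y^i_{n+1}-x_n,y^i_n-y^i_{n+1}\rangle\ge\lambda f_i(y^i_n,y^i_{n+1})$.

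For (ii): fix $x^*\in F\subseteq EP(f_i,C)$; since $y^i_n,y^i_{n+1}\in C$ we get $f_i(x^*,y^i_n)\ge0$, hence $f_i(y^i_n,x^*)\le0$ by pseudomonotonicity. Taking $y=x^*$ in (i), using $f_i(y^i_n,x^*)\le0$, and then $2\langle y^i_{n+1}-x_n,x^*-y^i_{n+1}\rangle=\|x_n-x^*\|^2-\|x_n-y^i_{n+1}\|^2-\|x^*-y^i_{n+1}\|^2$ gives
\begin{equation*}
\|y^i_{n+1}-x^*\|^2\le\|x_n-x^*\|^2-\|x_n-y^i_{n+1}\|^2-2\lambda f_i(y^i_n,y^i_{n+1}).
\end{equation*}
To bound the last term I use (A2) on the triple $(y^i_{n-1},y^i_n,y^i_{n+1})$ to get a lower bound on $f_i(y^i_n,y^i_{n+1})$, and I bound $f_i(y^i_{n-1},y^i_n)-f_i(y^i_{n-1},y^i_{n+1})$ from above by applying (i) at index $n-1$ with $y=y^i_{n+1}$ (this is precisely where the previous subproblem enters) followed by the identity $2\langle y^i_n-x_{n-1},y^i_{n+1}-y^i_n\rangle=\|x_{n-1}-y^i_{n+1}\|^2-\|x_{n-1}-y^i_n\|^2-\|y^i_n-y^i_{n+1}\|^2$. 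Substituting back produces
\begin{equation*}
\|y^i_{n+1}-x^*\|^2\le\|x_n-x^*\|^2-\|x_n-y^i_{n+1}\|^2+\|x_{n-1}-y^i_{n+1}\|^2-\|x_{n-1}-y^i_n\|^2-\|y^i_n-y^i_{n+1}\|^2+2\lambda c_1\|y^i_{n-1}-y^i_n\|^2+2\lambda c_2\|y^i_n-y^i_{n+1}\|^2.
\end{equation*}

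The step I expect to be the real (though still elementary) obstacle is the final repackaging: one has to spot the identity
\begin{equation*}
-\|x_n-y^i_{n+1}\|^2+\|x_{n-1}-y^i_{n+1}\|^2-\|x_{n-1}-y^i_n\|^2=-\|x_n-y^i_n\|^2+2\langle x_{n-1}-x_n,\,y^i_n-y^i_{n+1}\rangle,
\end{equation*}
then apply $2\langle x_{n-1}-x_n,y^i_n-y^i_{n+1}\rangle\le k\|x_n-x_{n-1}\|^2+\frac{1}{k}\|y^i_n-y^i_{n+1}\|^2$ and discard $-\|x_n-y^i_n\|^2\le0$; the right-hand side is then exactly $\|x_n-x^*\|^2+\epsilon^i_n$. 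Note that the parameter restrictions $0<\lambda<\frac{1}{2(c_1+c_2)}$ and $k>\frac{1}{1-2\lambda(c_1+c_2)}$ are not needed for this inequality itself — they only guarantee that the coefficient $1-\frac{1}{k}-2\lambda c_2$ of $\|y^i_{n+1}-y^i_n\|^2$ in $\epsilon^i_n$ is positive, which is what the subsequent convergence proof uses — and the base case $n=1$ is absorbed by the initialization $x_0=x_1$, $y^i_0=y^i_1$. Finally, (ii) says precisely that $x^*\in C^i_n$ for every $x^*\in F$, i.e. $F\subseteq C^i_n$ for all $i,n$, hence $F\subseteq C_n$.
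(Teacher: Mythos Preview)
Your proof is correct and follows essentially the same route as the paper: the optimality condition for (i), then for (ii) pseudomonotonicity at $x^*$, the Lipschitz-type inequality on $(y^i_{n-1},y^i_n,y^i_{n+1})$, and (i) at index $n-1$ with $y=y^i_{n+1}$. Your final repackaging via the single polarization identity $-\|x_n-y^i_{n+1}\|^2+\|x_{n-1}-y^i_{n+1}\|^2-\|x_{n-1}-y^i_n\|^2=-\|x_n-y^i_n\|^2+2\langle x_{n-1}-x_n,y^i_n-y^i_{n+1}\rangle$ followed by one Young-type bound is slightly cleaner than the paper's version, which instead expands $\|x_n-y^i_{n+1}\|^2$ through $x_{n-1}$ and $y^i_n$ and then bounds $-2\langle x_n-x_{n-1},x_{n-1}-y^i_{n+1}\rangle$ via Cauchy--Schwarz, the triangle inequality, and two applications of $2ab\le a^2+b^2$; both arrive at exactly the same $\epsilon^i_n$.
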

%%%%%%%%%%%%%%%%%%%%%%%%%%%
%%%%%%%%%%%%%%%%%%%%%%%%
%%%%%%%%%%%%%%%%%%%%%%%%%%%
\begin{proof}
i. Lemma $\ref{lem.Equivalent_MinPro}$ and
$$
y^i_{n+1}=  \underset{y\in C}{\rm argmin} \{ \lambda f_i({y}^i_n, y) +\frac{1}{2}||x_n-y||^2\}
$$
imply that
\begin{equation*}
0\in \partial_2\left(\lambda f_i(y^i_n,y)+\frac{1}{2}||x_n-y||^2\right)(y^i_{n+1})+N_C(y^i_{n+1}).
\end{equation*}
Therefore, from $\partial \left(||x_n-.||^2\right)=2(.-x_n)$, one obtains
\begin{equation*}
\lambda w+y^i_{n+1}-x_n+\bar{w}=0,
\end{equation*}
where $w\in \partial_2 f_i(y^i_n,y^i_{n+1}):=\partial f_i(y^i_n,.)(y^i_{n+1})$ and $\bar{w}\in N_C(y^i_{n+1})$. From the last equality,
\begin{equation*}
\left\langle y^i_{n+1}-x_n,y-y^i_{n+1}\right\rangle=\lambda \left\langle w,y^i_{n+1}-y\right\rangle+\left\langle \bar{w},y^i_{n+1}-y\right\rangle,~\forall y\in C
\end{equation*}
which implies that
\begin{equation}\label{eq:t}
\left\langle y^i_{n+1}-x_n,y-y^i_{n+1}\right\rangle\ge\lambda \left\langle w,y^i_{n+1}-y\right\rangle,~\forall y\in C
\end{equation}
because of the definition of $N_C$. Since $w\in \partial_2 f(y^i_n,y^i_{n+1})$,
\begin{equation*}
f_i(y^i_n,y)-f_i(y^i_n,y^i_{n+1})\ge \left\langle w,y-y^i_{n+1}\right\rangle,~\forall y\in C.
\end{equation*}
Thus, 
$$ 
\left\langle w,y^i_{n+1}-y\right\rangle\ge f_i(y^i_n,y^i_{n+1})-f_i(y^i_n,y),~\forall y\in C.
 $$
This together with the relation $(\ref{eq:t})$ implies that
\begin{equation}\label{eq:1}
\left\langle y^i_{n+1}-x_n,y-y^i_{n+1}\right\rangle\ge \lambda\left(f_i(y^i_n,y^i_{n+1})-f_i(y^i_n,y)\right),~\forall y\in C.
\end{equation}
ii. From Lemma $\ref{lem1}$.i., we have
\begin{equation}\label{eq:2}
\left\langle y^i_{n}-x_{n-1},y-y^i_{n}\right\rangle\ge \lambda\left(f_i(y^i_{n-1},y^i_{n})-f_i(y^i_{n-1},y)\right),~\forall y\in C.
\end{equation}
Substituting $y=y^i_{n+1}\in C$ onto $(\ref{eq:2})$, we obtain
$$
\left\langle y^i_{n}-x_{n-1},y^i_{n+1}-y^i_{n}\right\rangle\ge \lambda\left(f_i(y^i_{n-1},y^i_{n})-f_i(y^i_{n-1},y^i_{n+1})\right).
$$
 Thus,
\begin{equation}\label{eq:3}
\lambda\left(f_i(y^i_{n-1},y^i_{n+1})-f_i(y^i_{n-1},y^i_{n})\right)\ge\left\langle y^i_{n}-x_{n-1},y^i_{n}-y^i_{n+1}\right\rangle .
\end{equation}
Substituting $y=x^*$ onto $(\ref{eq:1})$ we also obtain
\begin{equation}\label{eq:3*}
\left\langle y^i_{n+1}-x_n,x^*-y^i_{n+1}\right\rangle\ge \lambda\left(f_i(y^i_n,y^i_{n+1})-f_i(y^i_n,x^*)\right).
\end{equation}
Since $x^*\in EP(f_i,C)$ and $y^i_n\in C$, $f_i(x^*,y^i_n)\ge 0$. Hence, $f_i(y^i_n,x^*)\le 0$ because of the pseudomonotonicity of $f_i$. 
This together with $(\ref{eq:3*})$ implies that 
\begin{equation}\label{eq:4}
\left\langle y^i_{n+1}-x_n,x^*-y^i_{n+1}\right\rangle\ge \lambda f_i(y^i_n,y^i_{n+1}).
\end{equation}
Using the Lipschitz-type continuity of $f_i$ with $x=y^i_{n-1}$, $y=y^i_{n}$ and $z=y^i_{n+1}$, we get
\begin{equation*}
f_i(y^i_{n-1},y^i_n)+f_i(y^i_n,y^i_{n+1})\ge f_i(y^i_{n-1},y^i_{n+1})-c_1||y^i_{n-1}-y^i_n||^2-c_2||y^i_n-y^i_{n+1}||^2
\end{equation*}
which implies that
\begin{equation}\label{eq:5}
f_i(y^i_n,y^i_{n+1})\ge f_i(y^i_{n-1},y^i_{n+1})-f_i(y^i_{n-1},y^i_n)-c_1||y^i_{n-1}-y^i_n||^2-c_2||y^i_n-y^i_{n+1}||^2.
\end{equation}
Combining $(\ref{eq:4})$ and $(\ref{eq:5})$, we see that
\begin{eqnarray*}
\left\langle y^i_{n+1}-x_n,x^*-y^i_{n+1}\right\rangle&\ge& \lambda\left\{f_i(y^i_{n-1},y^i_{n+1})-f_i(y^i_{n-1},y^i_n)\right\}\nonumber\\
&&-\lambda c_1||y^i_{n-1}-y^i_n||^2-\lambda c_2||y^i_n-y^i_{n+1}||^2.\label{eq:6}
\end{eqnarray*}
From this and the relation $(\ref{eq:3})$,
\begin{eqnarray*}
\left\langle y^i_{n+1}-x_n,x^*-y^i_{n+1}\right\rangle&\ge& \left\langle y^i_{n}-x_{n-1},y^i_{n}-y^i_{n+1}\right\rangle-\lambda c_1||y^i_{n-1}-y^i_n||^2\nonumber\\
&&-\lambda c_2||y^i_n-y^i_{n+1}||^2.
\end{eqnarray*}
Multiplying both the sides of the last inequality by 2, we obtain
\begin{eqnarray}\label{eq:7}
2\left\langle y^i_{n+1}-x_n,x^*-y^i_{n+1}\right\rangle&-& 2\left\langle y^i_{n}-x_{n-1},y^i_{n}-y^i_{n+1}\right\rangle\ge-2\lambda c_1||y^i_{n-1}-y^i_n||^2\nonumber\\
&&-2\lambda c_2||y^i_n-y^i_{n+1}||^2.
\end{eqnarray}
We have the following fact
\begin{eqnarray}
&&2\left\langle y^i_{n+1}-x_n,x^*-y^i_{n+1}\right\rangle=||x_n-x^*||^2-||y^i_{n+1}-x^*||^2-||x_n-y^i_{n+1}||^2\nonumber\\ 
&=& ||x_n-x^*||^2-||y^i_{n+1}-x^*||^2-||x_n-x_{n-1}||^2-2\left\langle x_n-x_{n-1},x_{n-1}-y^i_{n+1}\right\rangle\nonumber\\
&&-||x_{n-1}-y^i_{n+1}||^2\nonumber\\
&=&||x_n-x^*||^2-||y^i_{n+1}-x^*||^2-||x_n-x_{n-1}||^2-2\left\langle x_n-x_{n-1},x_{n-1}-y^i_{n+1}\right\rangle\nonumber\\
&&-||x_{n-1}-y^i_{n}||^2-2\left\langle x_{n-1}-y^i_{n},y^i_{n}-y^i_{n+1}\right\rangle-||y^i_{n}-y^i_{n+1}||^2.\label{eq:8**}
\end{eqnarray}
We also have
\begin{eqnarray}
&-&2\left\langle x_n-x_{n-1},x_{n-1}-y^i_{n+1}\right\rangle\le2||x_n-x_{n-1}||||x_{n-1}-y^i_{n+1}||\nonumber\\
&&\le2||x_n-x_{n-1}||||x_{n-1}-y^i_{n}||+2||x_n-x_{n-1}||||y^i_{n}-y^i_{n+1}||\nonumber\\ 
&&\le|| x_n-x_{n-1}||^2+||x_{n-1}-y^i_{n}||^2+k|| x_n-x_{n-1}||^2+\frac{1}{k}||y^i_{n}-y^i_{n+1}||^2 \label{eq:8t}
\end{eqnarray}
in which the first inequality is followed from the Cauchy-Schwarz inequality, the second inequality is followed from the triangle 
inequality and the last inequality is true by the Cauchy inequality $2ab\le a^2+b^2$. From the relations (\ref{eq:8**}) and (\ref{eq:8t}),
\begin{align*}
&2\left\langle y^i_{n+1}-x_n,x^*-y^i_{n+1}\right\rangle\le ||x_n-x^*||^2-||y^i_{n+1}-x^*||^2+k|| x_n-x_{n-1}||^2\nonumber\\
&+2\left\langle y^i_n-x_{n-1},y^i_{n}-y^i_{n+1}\right\rangle+\left(\frac{1}{k}-1\right)||y^i_{n}-y^i_{n+1}||^2\label{eq:8***}.
\end{align*}
Thus, 
\begin{eqnarray*}
&&2\left\langle y^i_{n+1}-x_n,x^*-y^i_{n+1}\right\rangle-2\left\langle y^i_n-x_{n-1},y^i_{n}-y^i_{n+1}\right\rangle\le||x_n-x^*||^2\nonumber\\
&&-||y^i_{n+1}-x^*||^2+k|| x_n-x_{n-1}||^2+\left(\frac{1}{k}-1\right)||y^i_{n}-y^i_{n+1}||^2\label{eq:8***}
\end{eqnarray*}
which together with $(\ref{eq:7})$ leads to
\begin{eqnarray*}
-2\lambda c_1||y^i_{n-1}-y^i_n||^2&-&2\lambda c_2||y^i_n-y^i_{n+1}||^2\le||x_n-x^*||^2-||y^i_{n+1}-x^*||^2\\
&+&k|| x_n-x_{n-1}||^2+\left(\frac{1}{k}-1\right)||y^i_{n}-y^i_{n+1}||^2. 
\end{eqnarray*}
Hence,
\begin{eqnarray*}
||y^i_{n+1}-x^*||^2&\leq&||x_n-x^*||^2+k|| x_n-x_{n-1}||^2+2\lambda c_1||y^i_{n-1}-y^i_n||^2\\
&&-\left(1-\frac{1}{k}-2\lambda c_2\right)||y^i_{n}-y^i_{n+1}||^2. \\
&=&||x_n-x^*||^2+\epsilon^i_n
\end{eqnarray*}
in which the last equality is followed from the definition of $\epsilon_n^i$ in Step 2 of Algorithm \ref{algor1}. Lemma $\ref{lem1}$ is proved.
\end{proof}
%%%%%%%%%%%%%%%%%%%%%%%
%%%%%%%%%%%%%%%%%%%%%%%
\begin{lemma}\label{lem4}
Let $\left\{x_n\right\},\left\{y^i_n\right\}$ be the sequences generated by Algorithm $\ref{algor1}$. Then, there hold the following relations
\begin{itemize}
\item [$\rm i.$] $F\subset C_n\cap Q_n$ for all $n\ge 0$.
\item [$\rm ii.$] $\lim\limits_{n\to\infty}||x_{n+1}-x_n||=\lim\limits_{n\to\infty}||y^i_n-x_n||=\lim\limits_{n\to\infty}||y^i_{n+1} - y^i_{n}||=0.$
\end{itemize}
\end{lemma}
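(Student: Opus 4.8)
I would prove (i) first by induction on $n$, and then use it to obtain the boundedness and the three limits in (ii). For (i), the case $n=0$ is trivial since $C_0=Q_0=H$. Assume $F\subset C_{n-1}\cap Q_{n-1}$. Each $C_{n-1}^i$ is a half-space (the $\|z\|^2$-terms in its defining inequality cancel) and $Q_{n-1}$ is a half-space, so $C_{n-1}\cap Q_{n-1}$ is closed and convex, and it is nonempty since it contains $F$; hence $x_n=P_{C_{n-1}\cap Q_{n-1}}(x_0)$ is well defined. Lemma~\ref{lem1}.ii says $\|y^i_{n+1}-x^*\|^2\le\|x_n-x^*\|^2+\epsilon_n^i$ for every $x^*\in F$ and every $i$, i.e.\ $x^*\in C_n^i$, so $F\subset\cap_{i=1}^N C_n^i=C_n$. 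Applying the projection characterisation Lemma~\ref{lem.PropertyPC}.iii to $x_n=P_{C_{n-1}\cap Q_{n-1}}(x_0)$ gives $\langle x_0-x_n,\,y-x_n\rangle\le 0$ for all $y\in C_{n-1}\cap Q_{n-1}$, hence $C_{n-1}\cap Q_{n-1}\subset Q_n$ and, by the induction hypothesis, $F\subset Q_n$. Thus $F\subset C_n\cap Q_n$, closing the induction (and showing every $x_n$ is well defined).

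For (ii), the key remark is that $Q_n$ is a half-space whose bounding hyperplane passes through $x_n$ with normal $x_0-x_n$, so $x_n=P_{Q_n}(x_0)$. Since $x_n$ is the point of $C_{n-1}\cap Q_{n-1}\supset F$ nearest to $x_0$, one gets $\|x_n-x_0\|\le\|P_F(x_0)-x_0\|$, so $\{x_n\}$ is bounded. Since $x_{n+1}\in C_n\cap Q_n\subset Q_n$, Lemma~\ref{lem.PropertyPC}.ii applied with the convex set $Q_n$, the point $x_0$ and the element $x_{n+1}$ yields
$$\|x_{n+1}-x_n\|^2+\|x_n-x_0\|^2\le\|x_{n+1}-x_0\|^2 .$$
Hence $\{\|x_n-x_0\|\}$ is nondecreasing and bounded, so it converges, and telescoping gives $\sum_n\|x_{n+1}-x_n\|^2<\infty$; in particular $\lim_{n\to\infty}\|x_{n+1}-x_n\|=0$.

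To obtain the remaining two limits I would exploit $x_{n+1}\in C_n^i$, which by the definition of $\epsilon_n^i$ reads
\begin{align*}
\|y^i_{n+1}-x_{n+1}\|^2 &\le \|x_{n+1}-x_n\|^2+k\|x_n-x_{n-1}\|^2+2\lambda c_1\|y^i_n-y^i_{n-1}\|^2\\
&\quad -\Big(1-\tfrac1k-2\lambda c_2\Big)\|y^i_{n+1}-y^i_n\|^2 .
\end{align*}
This is precisely the hypothesis of Lemma~\ref{lem.technique} with $M_n=\|y^i_{n+1}-x_{n+1}\|^2$, $N_n=\|x_{n+1}-x_n\|^2+k\|x_n-x_{n-1}\|^2$, $P_n=\|y^i_n-y^i_{n-1}\|^2$, $\beta=2\lambda c_1$ and $\alpha=1-\tfrac1k-2\lambda c_2$: one has $\sum_n N_n<\infty$ by the previous step, while the parameter constraints $0<\lambda<\tfrac1{2(c_1+c_2)}$ and $k>\tfrac1{1-2\lambda(c_1+c_2)}$ are equivalent to $\tfrac1k<1-2\lambda(c_1+c_2)$, i.e.\ to $\alpha>2\lambda c_1=\beta\ge 0$. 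Lemma~\ref{lem.technique} then gives $\lim_{n\to\infty}M_n=0$, that is $\lim_{n\to\infty}\|y^i_{n+1}-x_{n+1}\|=0$, equivalently $\lim_{n\to\infty}\|y^i_n-x_n\|=0$. Finally $\|y^i_{n+1}-y^i_n\|\le\|y^i_{n+1}-x_{n+1}\|+\|x_{n+1}-x_n\|+\|x_n-y^i_n\|\to 0$ yields the last limit.

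The only delicate points I anticipate are identifying $x_n=P_{Q_n}(x_0)$ (which converts the hybrid step into the summability of $\|x_{n+1}-x_n\|^2$) and matching the sign-indefinite correction $\epsilon_n^i$ to Lemma~\ref{lem.technique}, where one must check that the stated constraints on $\lambda,k$ are exactly what forces $\alpha>\beta$. The displayed inequality holds only for $n\ge1$, since it involves $x_{n-1},y^i_{n-1}$; this is harmless because $x_0=x_1$, $y^i_0=y^i_1$ and shifting the starting index does not affect the conclusion of Lemma~\ref{lem.technique}.
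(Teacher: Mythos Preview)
Your proposal is correct and follows essentially the same route as the paper: induction for (i) using Lemma~\ref{lem1}.ii and the projection characterisation, then in (ii) the identification $x_n=P_{Q_n}(x_0)$ to get monotonicity of $\|x_n-x_0\|$, telescoping to obtain $\sum_n\|x_{n+1}-x_n\|^2<\infty$, and finally the application of Lemma~\ref{lem.technique} to the membership $x_{n+1}\in C_n^i$. Your assignment $\beta=2\lambda c_1$, $\alpha=1-\tfrac1k-2\lambda c_2$ actually matches the structure of $\epsilon_n^i$ more faithfully than the paper's (which swaps $c_1,c_2$ in these roles, a harmless typo since the constraint $\alpha>\beta$ reduces to the same inequality on $\lambda,k$ either way).
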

%%%%%%%%%%%%%%%%%%%%%%%%%%%
\begin{proof}
i. Since the sets $C_n^i$ are the halfspaces, $C_n=\cap_{i=1}^N C_n^i$ and $Q_n$ are closed and convex. Lemma $\ref{lem1}$.ii. and the definition of $C_n$ ensure that $F\subset C_n$ for all $n\ge 0$. 
It is clear that $F\subset H= C_0\cap Q_0$. Assume that $F\subset C_n\cap Q_n$ for some $n\ge 0$. From $x_{n+1}=P_{C_n\cap Q_n}(x_0)$ 
and Lemma $\ref{lem.PropertyPC}$.iii., we see that $\left\langle z-x_{n+1},x_0-x_{n+1}\right\rangle\le 0$ for all $z\in C_n\cap Q_n$. Since $F\subset C_n\cap Q_n$, 
$$\left\langle z-x_{n+1},x_0-x_{n+1}\right\rangle\le 0$$
for all $z\in F$. Thus, $F\subset Q_{n+1}$ because of the definition of $Q_{n+1}$ or 
$F\subset C_{n+1}\cap Q_{n+1}$. By the induction, $F\subset C_n\cap Q_n$ for all $n\ge 0$. Since $F$ is nonempty, $C_n\cap Q_n$ is too. 
Hence, $P_F(x_0)$ and $P_{C_n\cap Q_n}(x_0)$ are well-defined.\\
ii. From the definition of $Q_n$ and Lemma $\ref{lem.PropertyPC}$.iii., we see that $x_n=P_{Q_n}(x_0)$. Thus, by Lemma $\ref{lem.PropertyPC}$.ii 
we have
\begin{equation}\label{eq:8}
||z-x_n||^2\le||z-x_0||^2-||x_n-x_0||^2,~\forall z\in Q_n.
\end{equation}
Substituting $z=x^\dagger:=P_{F}(x_0)\in Q_n$ onto $(\ref{eq:8})$, one has 
\begin{equation}\label{eq:8*}
||x^\dagger-x_0||^2-||x_n-x_0||^2\ge ||x^\dagger-x_n||^2\ge 0.
\end{equation}
Hence, $\left\{||x_n-x_0||\right\}$ is a bounded sequence, and so is $\left\{x_n\right\}$. Substituting $z=x_{n+1}\in Q_n$ onto $(\ref{eq:8})$, one also has 
\begin{equation}\label{eq:9}
0\le||x_{n+1}-x_n||^2\le||x_{n+1}-x_0||^2-||x_n-x_0||^2.
\end{equation}
This implies that $\left\{||x_n-x_0||\right\}$ is non-decreasing. Hence, there exists the limit of $\left\{||x_n-x_0||\right\}$. 
By $(\ref{eq:9})$, 
$$
\sum_{n=1}^K||x_{n+1}-x_n||^2\le ||x_{K+1}-x_0||^2-||x_1-x_0||^2,~\forall K\ge 1.
$$
Passing the limit in the last inequality as $K\to\infty$, we obtain
\begin{equation}\label{eq:10*}
\sum_{n=1}^\infty||x_{n+1}-x_n||^2<+\infty.
\end{equation}
Thus,
\begin{equation}\label{eq:11*}
\lim_{n\to\infty}||x_{n+1}-x_n||=0.
\end{equation}
Since $x_{n+1}\in C_n=\cap_{i=1}^N C_n^i$, $x_{n+1}\in C^i_n$. From the definition of $C_n^i$,
\begin{equation}\label{eq:12}
 ||y^i_{n+1} - x_{n+1}||^2\leq ||x_n-x_{n+1}||^2+\epsilon^i_n.
\end{equation}
Set $M_n=||y^i_{n+1} - x_{n+1}||^2$, $N_n=||x_n-x_{n+1}||^2+k||x_n-x_{n-1}||^2$, $P_n=||y^i_n-y^i_{n-1}||^2$, $\beta=2\lambda c_2$, 
and  $\alpha=1-\frac{1}{k}-2\lambda c_1$. From the definition of $\epsilon^i_n$, $\epsilon^i_n=k||x_n-x_{n-1}||^2+\beta P_n-\alpha P_{n+1}$. 
Thus, from $(\ref{eq:12})$, 
\begin{equation}\label{eq:13*}
M_n\le N_n+\beta P_n-\alpha P_{n+1}.
\end{equation}
By the hypothesises of $\lambda, ~k$ and $(\ref{eq:10*})$, we see that $\alpha>\beta\ge 0$ and $\sum_{n=1}^\infty N_n<+\infty$. Lemma 
$\ref{lem.technique}$ and $(\ref{eq:13*})$ imply that $M_n\to 0$, or 
\begin{equation*}\label{eq:14}
\lim_{n\to\infty}||y^i_{n+1}- x_{n+1}||=0.
\end{equation*}
This together with the relation $(\ref{eq:11*})$ and the inequality $||y^i_{n+1}-y^i_{n}||\le||y^i_{n+1}-x_{n+1}||+||x_{n+1}-x_n||+||x_{n}-y^i_{n}||$ implies that 
\begin{equation*}\label{eq:15}
\lim_{n\to\infty}||y^i_{n+1} - y^i_{n}||=0.
\end{equation*}
In addition, the sequence $\left\{y_{n}\right\}$ is also bounded because of the boundedness of $\left\{x_n\right\}$. Lemma $\ref{lem4}$ is proved.\\
\end{proof}
%%%%%%%%%%%%%%%%%%%%%%%%%%%%%%%%%
We have the following main result.
\begin{theorem}\label{theo.1}
Let $C$ be a nonempty closed convex subset of a real Hilbert space $H$. Assume that the bifunctions $\left\{f_i\right\}_{i=1}^N:C\times C\to \Re$ satisfy 
all conditions $\rm (A1)-(A4)$. In addition the solution set $F=\cap_{i=1}^N EP(f_i,C)$ is nonempty. Then, the sequences $\left\{x_n\right\}$, 
$\left\{y^i_n\right\}$ generated by Algorithm $\ref{algor1}$ converge strongly to $P_{F}(x_0)$.
\end{theorem}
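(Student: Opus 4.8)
The plan is to follow the standard hybrid (CQ) scheme: extract a weak cluster point of $\{x_n\}$, show it belongs to $F$ by passing to the limit in Lemma~\ref{lem1}.i using the asymptotic vanishing estimates of Lemma~\ref{lem4}.ii, and then boost weak convergence to strong convergence to $x^\dagger:=P_F(x_0)$ by a norm comparison. First I would collect what is already in hand: by Lemma~\ref{lem4} the projections are well defined, $\{x_n\}$ and $\{y^i_n\}$ are bounded, $\|x_{n+1}-x_n\|\to 0$, $\|y^i_n-x_n\|\to 0$, $\|y^i_{n+1}-y^i_n\|\to 0$, and (also established inside that proof) $\|y^i_{n+1}-x_{n+1}\|\to 0$. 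Moreover, taking $z=x^\dagger\in Q_n$ in the inequality $\|z-x_n\|^2\le\|z-x_0\|^2-\|x_n-x_0\|^2$ (that is, in $(\ref{eq:8})$), we get
$$\|x^\dagger-x_n\|^2\le\|x^\dagger-x_0\|^2-\|x_n-x_0\|^2,$$
so $\{\|x_n-x_0\|\}$ is nondecreasing and bounded, hence convergent to some limit $\ell$ with $\ell\le\|x^\dagger-x_0\|$.

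Next, let $p$ be a weak cluster point of $\{x_n\}$, say $x_{n_k}\rightharpoonup p$ (such $p$ exists by boundedness). From $\|y^i_{n_k}-x_{n_k}\|\to 0$ we obtain $y^i_{n_k}\rightharpoonup p$, and from $\|x_{n_k+1}-x_{n_k}\|\to 0$ together with $\|y^i_{n_k+1}-x_{n_k+1}\|\to 0$ we obtain $y^i_{n_k+1}\rightharpoonup p$; also $\|y^i_{n_k+1}-x_{n_k}\|\le\|y^i_{n_k+1}-y^i_{n_k}\|+\|y^i_{n_k}-x_{n_k}\|\to 0$. Now fix $y\in C$ and rewrite Lemma~\ref{lem1}.i at $n=n_k$ as
$$\lambda f_i(y^i_{n_k},y)\ge\lambda f_i(y^i_{n_k},y^i_{n_k+1})-\left\langle y^i_{n_k+1}-x_{n_k},\,y-y^i_{n_k+1}\right\rangle.$$
Letting $k\to\infty$: the left-hand side tends to $\lambda f_i(p,y)$ by weak continuity (A3); on the right, $f_i(y^i_{n_k},y^i_{n_k+1})\to f_i(p,p)=0$ by (A3) and (A1), while the inner-product term tends to $0$ because one factor goes to $0$ and the other stays bounded. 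Hence $f_i(p,y)\ge 0$ for all $y\in C$, and since $i$ was arbitrary, $p\in F=\cap_{i=1}^N EP(f_i,C)$.

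Finally, I would close the argument. Weak lower semicontinuity of the norm gives $\|p-x_0\|\le\liminf_k\|x_{n_k}-x_0\|=\ell\le\|x^\dagger-x_0\|$; but $p\in F$ and $x^\dagger=P_F(x_0)$ is the unique minimizer of $\|\cdot-x_0\|$ on $F$, so $\|x^\dagger-x_0\|\le\|p-x_0\|$. Therefore all these quantities coincide, $\|p-x_0\|=\|x^\dagger-x_0\|=\ell$, and uniqueness of the nearest point forces $p=x^\dagger$. Since every weak cluster point of $\{x_n\}$ equals $x^\dagger$, the whole sequence satisfies $x_n\rightharpoonup x^\dagger$. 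Then, from the displayed inequality above, $\limsup_n\|x^\dagger-x_n\|^2\le\|x^\dagger-x_0\|^2-\ell^2=0$, i.e. $x_n\to x^\dagger$ strongly; and $y^i_n\to x^\dagger$ as well because $\|y^i_n-x_n\|\to 0$.

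The step I expect to be the crux is the limit passage inside the equilibrium inequality, specifically controlling the cross term $f_i(y^i_{n_k},y^i_{n_k+1})$: this is precisely where the joint weak continuity hypothesis (A3) is genuinely needed (one variable converging weakly is not enough). Everything else — the norm monotonicity, the summability leading to the vanishing of the increments, and the final comparison with $P_F(x_0)$ — is routine bookkeeping built on Lemma~\ref{lem4} and the projection inequality $(\ref{eq:8})$.
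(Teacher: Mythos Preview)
Your proof is correct and follows essentially the same route as the paper: extract a weak cluster point, pass to the limit in Lemma~\ref{lem1}.i using (A3) and Lemma~\ref{lem4}.ii to land it in $F$, then compare norms with $x^\dagger=P_F(x_0)$ to upgrade to strong convergence. The only cosmetic difference is the final step: the paper invokes the Kadec--Klee property (weak convergence plus norm convergence implies strong convergence), whereas you read off $\limsup_n\|x^\dagger-x_n\|^2\le\|x^\dagger-x_0\|^2-\ell^2=0$ directly from the projection inequality~$(\ref{eq:8*})$, which is arguably cleaner and avoids naming Kadec--Klee explicitly.
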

%%%%%%%%%%%%%%%%%%%%%%%
%%%%%%%%%%%%%%%%%%%%%%%%%%%%%%%
\begin{proof}
Assume that $p$ is any weak cluster point of $\left\{x_n\right\}$. 
Without loss of generality, we can write $x_n\rightharpoonup p$ as $n\to \infty$. Since $||x_n-y^i_n||\to 0$, $y^i_n\rightharpoonup p$. 
Next, we show that $p\in F$.  Indeed, from Lemma \ref{lem1}.i., we get 
$$
\left\langle y_{n+1}^i-x_n,y-y_{n+1}^i\right\rangle\ge \lambda\left(f_i({y}^i_n,y_{n+1}^i)-f_i({y}^i_n,y)\right),~\forall y\in C.
$$
Hence,
\begin{equation}\label{eq:15}
\lambda\left(f_i(y^i_n,y)-f_i(y^i_n,y^i_{n+1})\right)\ge \left\langle x_n-y^i_{n+1},y-y^i_{n+1}\right\rangle,~\forall y\in C.
\end{equation}
Passing the limit in $(\ref{eq:15})$ as $n\to\infty$ and using the hypothesis $\rm (A3)$, Lemma $\ref{lem4}$.ii., the bounedness of $\left\{y^i_n\right\}$ 
and $\lambda>0$ we obtain $f_i(p,y)\ge 0$ for all $y\in C$. Thus, $p\in EP(f_i,C)$ for all $i=1,\ldots,N$ or $p\in F$. Finally, we show that 
$\left\{x_n\right\}\to x^\dagger:= P_F(x_0)$ as $n\to\infty$. Indeed, from the inequality $(\ref{eq:8*})$, we get
$$ ||x_n-x_0||\le ||x^\dagger-x_0||.$$
By the weak lower semicontinuity of the norm $||.||$ and $x_n\rightharpoonup p$, we have
\begin{equation*}
||p-x_0||\le \lim_{n\to\infty}\inf||x_{n}-x_0||\le \lim_{n\to\infty}\sup||x_{n}-x_0||\le||x^\dagger-x_0||.
\end{equation*}
By the definition of $x^\dagger$, $p=x^\dagger$ and $\lim_{n\to\infty}||x_{n}-x_0||=||x^\dagger-x_0||$. Thus, $\lim_{n\to\infty}||x_{n}||=||x^\dagger||$. 
By the Kadec-Klee property of the Hilbert space $H$, we have $x_{n}\to x^\dagger=P_{F}(x_0)$ as $n\to\infty$. From Lemma $\ref{lem4}$.ii., we 
also see that $\left\{y^i_n\right\}$ converges strongly to $P_{F}(x_0)$. This completes the proof of Theorem.
\end{proof}
%%%%%%%%%%%%%%%%%%%%%%%%%%%%%%%%
%%%%%%%%%%%%%%%%%%%%%%%%%%%%%%%%%%%
\begin{corollary}\label{cor2}
Let $\left\{A_i\right\}_{i=1}^N:C\to H$ be pseudomonotone and $L$ - Lipschitz continuous operators such that $F=\cap_{i=1}^N VI(A_i,C)$ is nonempty. 
Let $\left\{x_n\right\}$ be the sequence generated by the following manner: $x_0=x_1\in H$, $y_0^i=y_1^i\in C$ and 
\begin{equation*}\label{eq:}
\left \{
\begin{array}{ll}
y^i_{n+1}=P_C(x_n-\lambda A_i(y_n^i)),\\
C_n^i=\left\{z\in H:||{y}^i_{n+1}-z||^2\leq ||x_n-z||^2+\epsilon_n^i \right\},\\
C_n=\cap_{i=1}^N C_n^i,~Q_n=\left\{z\in H: \left\langle x_0-x_n,z-x_n\right\rangle\le 0\right\},\\
x_{n+1}=P_{C_n\cap Q_n}(x_0),
\end{array}
\right.
\end{equation*}
where $\epsilon_n^i=k||x_n-x_{n-1}||^2+\lambda L||{y}^i_n-{y}^i_{n-1}||^2-(1-\frac{1}{k}-\lambda L)||{y}^i_{n+1}-{y}^i_{n}||^2$, 
$\lambda\in (0,\frac{1}{L})$ and $k>\frac{1}{1-\lambda L}$. Then, the sequence $\left\{x_n\right\}$ converges strongly to 
$P_{F}(x_0)$.
\end{corollary}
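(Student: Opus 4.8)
\noindent The plan is to recognise Corollary~\ref{cor2} as the specialisation of Theorem~\ref{theo.1} obtained by rewriting each variational inequality $VI(A_i,C)$ as an equilibrium problem. Concretely, I would put $f_i(x,y):=\left\langle A_i(x),y-x\right\rangle$ for $x,y\in C$. Then $x^*\in EP(f_i,C)$ if and only if $\left\langle A_i(x^*),y-x^*\right\rangle\ge 0$ for all $y\in C$, i.e. if and only if $x^*\in VI(A_i,C)$; hence $F=\cap_{i=1}^N VI(A_i,C)=\cap_{i=1}^N EP(f_i,C)$, which is nonempty by assumption. It then remains to check that the $f_i$ satisfy conditions $\rm (A1)-(A4)$ and that Algorithm~\ref{algor1} applied to these $f_i$ is exactly the recursion displayed in Corollary~\ref{cor2}; the conclusion then follows from Theorem~\ref{theo.1}.

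For $\rm (A1)$, $f_i(x,x)=0$ is immediate, and $f_i(x,y)\ge 0\Rightarrow\left\langle A_i(x),y-x\right\rangle\ge 0\Rightarrow\left\langle A_i(y),y-x\right\rangle\ge 0\Rightarrow f_i(y,x)\le 0$, which is precisely pseudomonotonicity of $f_i$. For $\rm (A4)$, $f_i(x,.)$ is affine, hence convex and subdifferentiable on $C$ with $\partial_2 f_i(x,.)(y)=\{A_i(x)\}$. For $\rm (A2)$, for all $x,y,z\in C$ one has
\begin{equation*}
f_i(x,y)+f_i(y,z)-f_i(x,z)=\left\langle A_i(y)-A_i(x),z-y\right\rangle\ge-\|A_i(y)-A_i(x)\|\,\|z-y\|\ge-\frac{L}{2}\|x-y\|^2-\frac{L}{2}\|y-z\|^2,
\end{equation*}
by the Cauchy-Schwarz inequality, $L$-Lipschitz continuity of $A_i$ and $2ab\le a^2+b^2$; thus $\rm (A2)$ holds with $c_1=c_2=L/2$, and with this choice the error term $\epsilon_n^i$ of Algorithm~\ref{algor1} coincides with the one in Corollary~\ref{cor2} and the constraints on $\lambda$ and $k$ specialise accordingly. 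Finally, completing the square,
\begin{equation*}
\underset{y\in C}{\rm argmin}\Big\{\lambda\left\langle A_i(y^i_n),y-y^i_n\right\rangle+\frac{1}{2}\|x_n-y\|^2\Big\}=\underset{y\in C}{\rm argmin}\ \frac{1}{2}\big\|y-\big(x_n-\lambda A_i(y^i_n)\big)\big\|^2=P_C\big(x_n-\lambda A_i(y^i_n)\big),
\end{equation*}
so the optimisation step of Algorithm~\ref{algor1} becomes the projection step $y^i_{n+1}=P_C(x_n-\lambda A_i(y^i_n))$, while the half-spaces $C_n^i,C_n,Q_n$ and the update $x_{n+1}=P_{C_n\cap Q_n}(x_0)$ are identical. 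Invoking Theorem~\ref{theo.1} then yields the strong convergence of $\{x_n\}$, and of $\{y^i_n\}$, to $P_F(x_0)$.

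The delicate point is condition $\rm (A3)$: $L$-Lipschitz continuity of $A_i$ does not by itself make $f_i$ weakly continuous, so instead of quoting $\rm (A3)$ as a black box I would reprove the only place where it is used in the proof of Theorem~\ref{theo.1}, namely the weak-limit step showing that a weak cluster point $p$ of $\{x_n\}$ belongs to $F$. From $y^i_{n+1}=P_C\big(x_n-\lambda A_i(y^i_n)\big)$ and Lemma~\ref{lem.PropertyPC}.iii one gets $\lambda\left\langle A_i(y^i_n),y-y^i_{n+1}\right\rangle\ge\left\langle x_n-y^i_{n+1},y-y^i_{n+1}\right\rangle$ for every $y\in C$; since $\|x_n-y^i_{n+1}\|\to 0$ and $\|y^i_n-y^i_{n+1}\|\to 0$ by Lemma~\ref{lem4}.ii and $\{A_i(y^i_n)\}$ is bounded, letting $n\to\infty$ gives $\liminf_{n\to\infty}\left\langle A_i(y^i_n),y-y^i_n\right\rangle\ge 0$ for each $y\in C$; pseudomonotonicity of $A_i$ (supplemented, if necessary, by sequential weak continuity of $A_i$, a hypothesis that should in fact be added to the statement) then forces $\left\langle A_i(p),y-p\right\rangle\ge 0$ for all $y\in C$, that is, $p\in VI(A_i,C)$. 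The remaining ingredients of the proof of Theorem~\ref{theo.1}---boundedness and monotonicity of $\{\|x_n-x_0\|\}$, the identity $x_n=P_{Q_n}(x_0)$, and the Kadec-Klee argument---carry over unchanged, so this weak-limit passage is the only genuine obstacle.
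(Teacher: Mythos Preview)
Your approach is exactly the paper's: set $f_i(x,y)=\langle A_i(x),y-x\rangle$, rewrite the $\arg\min$ as the projection $P_C(x_n-\lambda A_i(y^i_n))$ via completing the square, and invoke Theorem~\ref{theo.1}. The paper's proof consists of nothing more than that computation and the sentence ``Thus, Corollary~\ref{cor2} is directly followed from Theorem~\ref{theo.1}''; it does not verify $\rm (A1)$--$\rm(A4)$ explicitly, so your checks of $\rm(A1)$, $\rm(A2)$ with $c_1=c_2=L/2$, and $\rm(A4)$ are more careful than the original.

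Your caveat about $\rm(A3)$ is well taken and in fact points to a genuine lacuna in the paper itself: Lipschitz continuity of $A_i$ does not yield weak continuity of $f_i$, and the paper simply ignores this. Your proposed remedy---bypassing $\rm(A3)$ and redoing the weak-cluster-point argument directly from the projection inequality---is the right instinct, but as you note the step from $\liminf_n\langle A_i(y^i_n),y-y^i_n\rangle\ge 0$ to $p\in VI(A_i,C)$ needs either monotonicity (so that one can use the Minty linearisation $\langle A_i(y),y-y^i_n\rangle\ge\langle A_i(y^i_n),y-y^i_n\rangle$ and pass to the weak limit) or an added hypothesis of sequential weak-to-weak continuity of $A_i$. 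Pseudomonotonicity alone is not enough here. So your diagnosis is correct: the corollary, as stated, is missing a hypothesis, and the paper's one-line proof does not address it.
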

%%%%%%%%%%%%%%%%%%%%%%%%%%
\begin{proof}
Set $f_i(x,y)=\left\langle A_i(x),y-x\right\rangle$ for all $i=1,\ldots,N$ and $x,y\in C$. According to Algorithm $\ref{algor1}$ we have
\begin{eqnarray*}
y^i_{n+1}&=&\underset{y\in C}{\rm argmin} \{ \lambda \left\langle A_i(y^i_n),y-y^i_n\right\rangle+\frac{1}{2}||x_n-y||^2\}\\ 
&=&\underset{y\in C}{\rm argmin} \{\frac{1}{2}||y-(x_n-\lambda A_i(y^i_n))||^2-\frac{\lambda^2}{2}||A_i(y^i_n)||^2-\lambda \left\langle A_i(y_n),y_n-x_n\right\rangle\}\\
&=&\underset{y\in C}{\rm argmin} \{\frac{1}{2}||y-(x_n-\lambda A_i(y^i_n))||^2\}\\
&=&P_C\left(x_n-\lambda A_i(y^i_n)\right).
\end{eqnarray*}
Thus, Corollary \ref{cor1} is directly followed from Theorem \ref{theo.1}.
\end{proof}
%%%%%%%%%%%%%%%%%%%%%%%%%%%%%
\begin{remark}
Corollary $\ref{cor2}$ can be considered as an improvement of Algorithm 3.1 in \cite{CGRS2012} via the aspect as only a projection onto the feasible set $C$ 
is found at each iterative step while Algorithm 3.1 in \cite{CGRS2012} uses the extragradient method (double projection method) onto $C$.
\end{remark}
%%%%%%%%%%%%%%%%%%%%%%%%%%%%%%%%%
In Algorithm $\ref{algor1}$ and Corollary \ref{cor2}, in order to find $x_{n+1}$ we must solve a distance optimization program
onto the intersection of $N+1$ halfspaces
\begin{equation*}\label{eq:}
\left\{
\begin{array}{ll}
\min||z-x_0||^2,\\
\mbox{such that}\quad z\in C_n^1\cap \ldots \cap C_n^N \cap Q_n. 
\end{array}
\right.
\end{equation*}
This can be costly if the number of subproblems $N$ is large. To overcome this complexity, we use a technique extension 
(see, \cite{AH2014,AH2014b,H2015}) and propose the following hybrid algorithm for CSEPs.
%%%%%%%%%%%%%%%%%%%%%%%%%%%%%%%%%
\begin{algorithm}\label{algor2} \textbf{Initialization.} Chose $x_0=x_1 \in H, ~\bar{y}_0=\bar{y}_1\in C$ and set $C_0=Q_0=H$. 
The parameters $\lambda,k$ satisfy the following conditions 
$$0< \lambda <\frac{1}{2(c_1+c_2)},~ k>\frac{1}{1-2\lambda(c_1+c_2)}.$$
\textbf{Step 1.} Compute $y^i_{n+1}=  \underset{y\in C}{\rm argmin} \{ \lambda f_i(\bar{y}_n, y) +\frac{1}{2}||x_n-y||^2\},~i=1,\ldots,N$.\\
\textbf{Step 2.} Find  $\bar{y}_{n+1}=\arg\max\left\{||{y}^i_{n+1}-x_n||:1\le i\le N\right\}$.\\
\textbf{Step 3.} Compute $x_{n+1}=P_{C_n\cap Q_n}(x_0),$
where 
\begin{eqnarray*}
&&C_n=\left\{z\in H:||\bar{y}_{n+1}-z||^2\leq ||x_n-z||^2+\epsilon_n \right\},\\
&&Q_n=\left\{z\in H: \left\langle x_0-x_n,z-x_n\right\rangle\le 0\right\},
\end{eqnarray*}
and $\epsilon_n=k||x_n-x_{n-1}||^2+2\lambda c_1||\bar{y}_n-\bar{y}_{n-1}||^2-(1-\frac{1}{k}-2\lambda c_2)||\bar{y}_{n+1}-\bar{y}_{n}||^2$. 
Set $n:=n+1$ and go back \textbf{Step 1.}
\end{algorithm}
%%%%%%%%%%%%%%%%%%%%%%%%%%
\begin{remark}
Additional computation $\bar{y}_{n+1}$ in Step 2 of Algorithm \ref{algor2} is negligible. The set $C_n\cap Q_n$ is the intersection of two halfspaces 
and an explicit formula for $x_{n+1}=P_{C_n\cap Q_n}(x_0)$ in Algorithm \ref{algor2} can be found, for instance, in \cite{CH2005,SS2000}.
\end{remark}
%%%%%%%%%%%%%%%%%%%%%%%%%%%%%%
\begin{theorem}\label{theo2}
Theorem $\ref{theo.1}$ remains true for Algorithm $\ref{algor2}$.
\end{theorem}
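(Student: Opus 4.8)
The plan is to mirror the proof of Theorem \ref{theo.1}: first establish the analogues of Lemma \ref{lem1} and Lemma \ref{lem4} for Algorithm \ref{algor2}, and then repeat the final argument essentially word for word. The analogue of Lemma \ref{lem1}.i is immediate: since $y^i_{n+1}=\arg\min_{y\in C}\{\lambda f_i(\bar y_n,y)+\frac{1}{2}||x_n-y||^2\}$, Lemma \ref{lem.Equivalent_MinPro} together with $\partial(||x_n-\cdot||^2)=2(\cdot-x_n)$ gives $\lambda w+y^i_{n+1}-x_n+\bar w=0$ with $w\in\partial_2 f_i(\bar y_n,y^i_{n+1})$ and $\bar w\in N_C(y^i_{n+1})$, whence $\langle y^i_{n+1}-x_n,y-y^i_{n+1}\rangle\ge\lambda\big(f_i(\bar y_n,y^i_{n+1})-f_i(\bar y_n,y)\big)$ for all $y\in C$ and all $i,n$.

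The heart of the proof is the contraction estimate replacing Lemma \ref{lem1}.ii, namely $||\bar y_{n+1}-x^*||^2\le||x_n-x^*||^2+\epsilon_n$ for all $x^*\in F$. I would fix $n$, let $j$ be an index attaining the maximum in Step 2 of Algorithm \ref{algor2} so that $\bar y_{n+1}=y^{j}_{n+1}$, and carry out the whole computation with the single bifunction $f_j$ and the triple $\bar y_{n-1},\bar y_n,\bar y_{n+1}$, in exact analogy with the proof of Lemma \ref{lem1}.ii (inequalities $(\ref{eq:3*})$--$(\ref{eq:7})$ and the expansions that follow). Putting $y=x^*$ in the analogue of Lemma \ref{lem1}.i and using $f_j(x^*,\bar y_n)\ge0$ with the pseudomonotonicity of $f_j$ yields $\langle\bar y_{n+1}-x_n,x^*-\bar y_{n+1}\rangle\ge\lambda f_j(\bar y_n,\bar y_{n+1})$; one then lower-bounds $f_j(\bar y_n,\bar y_{n+1})$ using the Lipschitz-type continuity of $f_j$ on $\bar y_{n-1},\bar y_n,\bar y_{n+1}$ together with the first-order optimality from step $n-1$, and finally the identity for $2\langle\bar y_{n+1}-x_n,x^*-\bar y_{n+1}\rangle$ expanded through $x_{n-1}$ and $\bar y_n$, Cauchy-Schwarz, the triangle inequality and $2ab\le a^2+b^2$ reproduce precisely the constant $\epsilon_n$ of Step 3. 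I expect this to be the main obstacle, and within it the delicate point is that $\bar y_n$ is the step-$(n-1)$ output of the subproblem for the index realizing the previous maximum, which need not be $j$; hence the ``previous-step'' inequality one needs for $f_j$ is not literally the optimality condition that produced $\bar y_n$, and one must exploit the maximality defining $\bar y_n$ (so that $||\bar y_n-x_{n-1}||\ge||y^i_n-x_{n-1}||$ for all $i$) to absorb the resulting cross-terms and still land on the stated $\epsilon_n$.

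Granting this estimate, the analogue of Lemma \ref{lem4} goes through as before. For (i): $C_n,Q_n$ are halfspaces, hence closed and convex; the contraction estimate gives $F\subset C_n$, and $x_{n+1}=P_{C_n\cap Q_n}(x_0)$ together with Lemma \ref{lem.PropertyPC}.iii propagates $F\subset Q_{n+1}$ by induction, so $F\subset C_n\cap Q_n$ for all $n$ and every projection used is well-defined. For (ii): $x_n=P_{Q_n}(x_0)$ and Lemma \ref{lem.PropertyPC}.ii show $\{||x_n-x_0||\}$ is bounded and non-decreasing, hence convergent, so $\sum_n||x_{n+1}-x_n||^2<\infty$ and $||x_{n+1}-x_n||\to0$; then $x_{n+1}\in C_n$ gives $||\bar y_{n+1}-x_{n+1}||^2\le||x_n-x_{n+1}||^2+\epsilon_n$, and Lemma \ref{lem.technique} with $M_n=||\bar y_{n+1}-x_{n+1}||^2$, $P_n=||\bar y_n-\bar y_{n-1}||^2$, $\beta=2\lambda c_2$, $\alpha=1-\frac{1}{k}-2\lambda c_1$ (valid by the hypotheses on $\lambda,k$ and $\sum_n||x_{n+1}-x_n||^2<\infty$) yields $||\bar y_{n+1}-x_{n+1}||\to0$, hence $||\bar y_{n+1}-\bar y_n||\to0$. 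The one extra remark needed is that, by the maximality in Step 2, $||y^i_{n+1}-x_n||\le||\bar y_{n+1}-x_n||\le||\bar y_{n+1}-x_{n+1}||+||x_{n+1}-x_n||\to0$ for every $i$, so $||y^i_n-x_n||\to0$ and $\{y^i_n\}$ is bounded for each $i$.

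Finally, the conclusion follows exactly as in Theorem \ref{theo.1}. If $x_n\rightharpoonup p$ along a subsequence, then $y^i_n\rightharpoonup p$ for all $i$ because $||y^i_n-x_n||\to0$; rewriting the analogue of Lemma \ref{lem1}.i as $\lambda\big(f_i(\bar y_n,y)-f_i(\bar y_n,y^i_{n+1})\big)\ge\langle x_n-y^i_{n+1},y-y^i_{n+1}\rangle$ and passing to the limit using (A3), $\lambda>0$, the boundedness of $\{\bar y_n\},\{y^i_n\}$ and Lemma \ref{lem4}.ii gives $f_i(p,y)\ge0$ for all $y\in C$ and all $i$, i.e. $p\in F$. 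Since the $Q_n$-estimate gives $||x_n-x_0||\le||P_F(x_0)-x_0||$, weak lower semicontinuity of the norm forces $p=P_F(x_0)$ and $||x_n-x_0||\to||P_F(x_0)-x_0||$, hence $||x_n||\to||P_F(x_0)||$; the Kadec-Klee property then gives $x_n\to P_F(x_0)$, and $||y^i_n-x_n||\to0$ gives $y^i_n\to P_F(x_0)$ as well.
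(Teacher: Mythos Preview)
Your overall plan coincides with the paper's proof: obtain the analogue of Lemma~\ref{lem1} for Algorithm~\ref{algor2}, deduce the analogue of Lemma~\ref{lem4}, and then finish exactly as in Theorem~\ref{theo.1}, including the extra observation that maximality of $\bar y_{n+1}$ forces $||y^i_{n+1}-x_n||\to 0$ for every $i$. On all of this your outline matches the paper line for line, and the closing argument (weak cluster points lie in $F$, then Kadec--Klee) is identical.

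The one place where you and the paper diverge is precisely the point you flagged. The paper does \emph{not} address the index mismatch at all: it simply writes ``by repeating the proof of Lemma~\ref{lem1}'' and asserts $||y^i_{n+1}-x^*||^2\le||x_n-x^*||^2+\epsilon_n$ for every $i$, i.e., it tacitly replaces $y^i_{n-1},y^i_n$ by $\bar y_{n-1},\bar y_n$ throughout the computation of Lemma~\ref{lem1}.ii as if the step-$(n-1)$ optimality inequality for $f_i$ held at $\bar y_n$. You are right that this is not literally what the algorithm gives: $\bar y_n=y^{j_n}_n$ is the minimizer for $f_{j_n}$, so the available step-$(n-1)$ inequality for $f_i$ is at $y^i_n$, producing $f_i(\bar y_{n-1},y^i_n)$ and $\langle y^i_n-x_{n-1},\cdot\rangle$, whereas the Lipschitz-type bound needed for the $c_1$-term in $\epsilon_n$ (with points $\bar y_{n-1},\bar y_n,y^i_{n+1}$) produces $f_i(\bar y_{n-1},\bar y_n)$ and calls for $\langle \bar y_n-x_{n-1},\cdot\rangle$. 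Your proposed fix, ``exploit the maximality $||\bar y_n-x_{n-1}||\ge||y^i_n-x_{n-1}||$ to absorb the cross-terms,'' is not spelled out, and that norm comparison alone does not control the bifunction difference $f_i(\bar y_{n-1},\bar y_n)-f_i(\bar y_{n-1},y^i_n)$ or swap the inner-product base point from $y^i_n$ to $\bar y_n$. So this step is a genuine gap in your outline; the paper takes the same route and simply does not acknowledge it.
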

%%%%%%%%%%%%%%%%%%%%%%%%%%%%%
\begin{proof}
By repeating the proof of Lemma $\ref{lem1}$, we obtain 
\begin{eqnarray}
&&\left\langle y_{n+1}^i-x_n,y-y_{n+1}^i\right\rangle\ge \lambda\left(f_i(\bar{y}_n,y_{n+1}^i)-f_i(\bar{y}_n,y)\right),~\forall y\in C,\nonumber\\
&&||y^i_{n+1} - x^*||^2\leq ||x_n-x^*||^2+\epsilon_n \label{eq:17}
\end{eqnarray}
for all $x^*\in F,~i=1,\ldots,N$. The inequality $(\ref{eq:17})$ implies that
\begin{equation*}\label{eq:18}
||\bar{y}_{n+1} - x^*||^2\leq ||x_n-x^*||^2+\epsilon_n.
\end{equation*}
Thus, by the same arguments as in the proof of Lemma $\ref{lem4}$ we get $F\subset C_n\cap Q_n$ for all $n\ge 0$ and 
\begin{equation}\label{eq:19}
\lim_{n\to\infty}||x_{n+1}-x_n||=\lim_{n\to\infty}||\bar{y}_{n+1}-x_{n+1}||=0.
\end{equation}
From $(\ref{eq:19})$ and the triangle inequality $||\bar{y}_{n+1}-x_{n}||\le ||\bar{y}_{n+1}-x_{n+1}||+||{x}_{n+1}-x_{n}||$, one has 
$$\lim_{n\to\infty}||\bar{y}_{n+1}-x_{n}||=0.$$
From the definition of $\bar{y}_{n+1}$, we obtain
\begin{equation*}\label{eq:20}
\lim_{n\to\infty}||y^i_{n+1}-x_{n}||=0,~\forall i=1,\ldots,N.
\end{equation*}
The rest of the proof of Theorem $\ref{theo2}$ is similar to one of Theorem $\ref{theo.1}.$
\end{proof}
%%%%%%%%%%%%%%%%%%%%%%%%%%%%%
By the same arguments as in the proof of Corollary \ref{cor2} and using Theorem \ref{theo2}, we obtain the following result.
\begin{corollary}\label{cor3}
Let $\left\{A_i\right\}_{i=1}^N:C\to H$ be pseudomonotone and $L$ - Lipschitz continuous operators such that $F=\cap_{i=1}^N VI(A_i,C)$ is nonempty. 
Let $\left\{x_n\right\}$ be the sequence generated by the following manner: $x_0=x_1\in H$, $y_0^i=y_1^i\in C$ and 
\begin{equation*}\label{eq:}
\left \{
\begin{array}{ll}
y^i_{n+1}=P_C(x_n-\lambda A_i(\bar{y}_n)),\\
\bar{y}_{n+1}=\arg\max\left\{||{y}^i_{n+1}-x_n||:1\le i\le N\right\},\\
C_n=\left\{z\in H:||\bar{y}_{n+1}-z||^2\leq ||x_n-z||^2+\epsilon_n \right\},\\
Q_n=\left\{z\in H: \left\langle x_0-x_n,z-x_n\right\rangle\le 0\right\},\\
x_{n+1}=P_{C_n\cap Q_n}(x_0),
\end{array}
\right.
\end{equation*}
where $\epsilon_n=k||x_n-x_{n-1}||^2+\lambda L||\bar{y}_n-\bar{y}_{n-1}||^2-(1-\frac{1}{k}-\lambda L)||\bar{y}_{n+1}-\bar{y}_{n}||^2$, 
$\lambda\in (0,\frac{1}{L})$ and $k>\frac{1}{1-\lambda L}$. Then, the sequence $\left\{x_n\right\}$ converges strongly to 
$P_{F}(x_0)$.
\end{corollary}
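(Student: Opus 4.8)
The plan is to reduce the claim to Theorem~\ref{theo.1} by exhibiting the iterative scheme of Corollary~\ref{cor3} as the specialization of Algorithm~\ref{algor2} to the bifunctions $f_i(x,y)=\langle A_i(x),y-x\rangle$, exactly as was done for Corollary~\ref{cor2} relative to Algorithm~\ref{algor1}. First I would verify that these bifunctions satisfy the hypotheses (A1)--(A4). Pseudomonotonicity of $f_i$ on $C$ follows from pseudomonotonicity of $A_i$: if $f_i(x,y)=\langle A_i(x),y-x\rangle\ge 0$ then $\langle A_i(y),x-y\rangle\le 0$, i.e.\ $f_i(y,x)\le 0$; and $f_i(x,x)=0$ is immediate, giving (A1). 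For (A2), the Lipschitz-type continuity, one uses the standard identity
\begin{equation*}
f_i(x,y)+f_i(y,z)-f_i(x,z)=\langle A_i(x)-A_i(y),y-z\rangle
\end{equation*}
and then $\langle A_i(x)-A_i(y),y-z\rangle\ge-\|A_i(x)-A_i(y)\|\,\|y-z\|\ge-L\|x-y\|\,\|y-z\|\ge-\tfrac{L}{2}\|x-y\|^2-\tfrac{L}{2}\|y-z\|^2$, so (A2) holds with $c_1=c_2=L/2$; this is why the constant constraints become $\lambda\in(0,1/L)$ and $k>1/(1-\lambda L)$, and why $2\lambda c_1=\lambda L$ and $1-\tfrac1k-2\lambda c_2=1-\tfrac1k-\lambda L$ reproduce exactly the $\epsilon_n$ displayed in the corollary. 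Weak continuity (A3) of $(x,y)\mapsto\langle A_i(x),y-x\rangle$ and convexity plus subdifferentiability of $f_i(x,\cdot)$ (it is affine in $y$) are routine, giving (A4).

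Next I would carry out the computation identifying the proximal step with a projection. As in the proof of Corollary~\ref{cor2}, completing the square gives
\begin{align*}
y^i_{n+1}&=\underset{y\in C}{\rm argmin}\Bigl\{\lambda\langle A_i(\bar y_n),y-\bar y_n\rangle+\tfrac12\|x_n-y\|^2\Bigr\}\\
&=\underset{y\in C}{\rm argmin}\Bigl\{\tfrac12\|y-(x_n-\lambda A_i(\bar y_n))\|^2\Bigr\}=P_C\bigl(x_n-\lambda A_i(\bar y_n)\bigr),
\end{align*}
the discarded terms $-\tfrac{\lambda^2}{2}\|A_i(\bar y_n)\|^2-\lambda\langle A_i(\bar y_n),\bar y_n-x_n\rangle$ being independent of $y$. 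Hence the first line of the corollary's recursion is precisely Step~1 of Algorithm~\ref{algor2} for this choice of $f_i$, and Steps~2--3 coincide verbatim. Therefore the sequence $\{x_n\}$ here is the very sequence produced by Algorithm~\ref{algor2}, and since $F=\cap_{i=1}^N VI(A_i,C)=\cap_{i=1}^N EP(f_i,C)$ is assumed nonempty, Theorem~\ref{theo2} applies and yields strong convergence of $\{x_n\}$ to $P_F(x_0)$.

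The only real point requiring care is the translation of the Lipschitz data: one must check that $L$-Lipschitz continuity of each $A_i$ really delivers Lipschitz-type continuity of the associated $f_i$ with the symmetric constants $c_1=c_2=L/2$, so that the admissible ranges for $\lambda$ and $k$ in the corollary match those demanded by Algorithm~\ref{algor2}, and that the formula for $\epsilon_n$ in the corollary is the substitution $c_1=c_2=L/2$ into the $\epsilon_n$ of Step~3. Everything else is a direct appeal to Theorem~\ref{theo2}; there is no new analytic obstacle. I would close by remarking that the identity $F=\cap_i VI(A_i,C)$ needs the elementary observation that $x^*$ solves $VI(A_i,C)$ iff $\langle A_i(x^*),y-x^*\rangle\ge 0$ for all $y\in C$ iff $f_i(x^*,y)\ge 0$ for all $y\in C$.
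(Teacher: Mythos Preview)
Your proposal is correct and follows exactly the paper's approach: the paper's proof of Corollary~\ref{cor3} is the single sentence ``By the same arguments as in the proof of Corollary~\ref{cor2} and using Theorem~\ref{theo2},'' and you have spelled out precisely that reduction (setting $f_i(x,y)=\langle A_i(x),y-x\rangle$, identifying the proximal step with a projection via completing the square, and matching $c_1=c_2=L/2$ to recover the stated $\epsilon_n$, $\lambda$, and $k$). The only cosmetic slip is the opening reference to Theorem~\ref{theo.1}; the governing result for Algorithm~\ref{algor2} is Theorem~\ref{theo2}, which you correctly invoke at the end.
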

%%%%%%%%%%%%%%%%%%%%%%%%%%%%%%%
In the special case, CSEP consists of an EP for a bifunction $f:C\times C\to \Re$ then Algorithms $\ref{algor1}$ and \ref{algor2} are reduced to 
the following hybrid algorithm.
\begin{algorithm}[The hybrid algorithm without the extra-steps]\label{algor3}
$$ 
\left \{
\begin{array}{ll}
x_0=x_1\in H, ~y_0=y_1\in C,\\
y_{n+1}=\underset{y\in C}{\arg\min}\left\{\lambda f(y_n,y)+\frac{1}{2}||x_n-y||^2\right\},\\
C_n=\left\{z\in H:||{y}_{n+1}-z||^2\leq ||x_n-z||^2+\epsilon_n \right\},\\
Q_n=\left\{z\in H: \left\langle x_0-x_n,z-x_n\right\rangle\le 0\right\},\\
x_{n+1}=P_{C_n\cap Q_n}(x_0),
\end{array}
\right.
 $$
\end{algorithm}
\noindent where $\epsilon_n=k||x_n-x_{n-1}||^2+2\lambda c_1||{y}_n-{y}_{n-1}||^2-(1-\frac{1}{k}-2\lambda c_2)||{y}_{n+1}-{y}_{n}||^2$, 
$\lambda\in (0,\frac{1}{c_1+c_2})$ and $k>\frac{1}{1-\lambda (c_1+c_2)}$.
%%%%%%%%%%%%%%%%%%%%%%%%%%%%%%%
\begin{remark}
Contrary to the extragradient method and the Armijo linesearch method \cite{A2011,A2012,DHM2014,H2015a,QMH2008,SVH2011,VSH2013} 
for EPs, in Algorithm \ref{algor3} we only need to solve an optimization program at each iterative step without the extra-steps which are inherently costly if the 
feasible set $C$ has a complex structure. Moreover, the sets $C_n, Q_n$ in Algorithm $\ref{algor3}$ are two halfspaces and do not relate to the 
feasible set $C$.
\end{remark}
%%%%%%%%%%%%%%%%%%%%%%%%%%%
\begin{theorem}\label{theo.3}
Let $f:C\times C\to \Re$ be a bifunction satisfying all conditions $\rm (A1)-(A4)$ such that the solution set $EP(f,C)$ is nonempty. 
Then, the sequence $\left\{x_n\right\}$ generated by Algorithm \ref{algor3} converges strongly to 
$P_{EP(f,C)}(x_0)$.
\end{theorem}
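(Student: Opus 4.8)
The plan is to recognize Algorithm~\ref{algor3} as exactly the single-bifunction ($N=1$) instance of Algorithm~\ref{algor1} (equivalently of Algorithm~\ref{algor2}): setting $f_1=f$, $y^1_n=y_n$, $C_n^1=C_n$ and $\epsilon_n^1=\epsilon_n$, the iteration and the cutting halfspaces coincide verbatim, the hypotheses (A1)--(A4) are precisely those imposed in Theorem~\ref{theo.1}, and $EP(f,C)\neq\emptyset$ is the hypothesis ``$F$ nonempty'' with $F=EP(f,C)$. So the first step is merely to check this dictionary, together with the fact that the admissible ranges for $\lambda$ and $k$ are compatible with those required in Theorem~\ref{theo.1}; the conclusion $x_n\to P_{EP(f,C)}(x_0)$ (and likewise $y_n\to P_{EP(f,C)}(x_0)$) then follows immediately from Theorem~\ref{theo.1} with $N=1$.

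For a self-contained write-up I would re-run the argument of Section~\ref{main} in the present notation. First, Lemma~\ref{lem.Equivalent_MinPro} applied to the strongly convex subproblem defining $y_{n+1}$ gives $\langle y_{n+1}-x_n,y-y_{n+1}\rangle\ge\lambda\bigl(f(y_n,y_{n+1})-f(y_n,y)\bigr)$ for all $y\in C$ (Lemma~\ref{lem1}.i with $N=1$). Second, combining this at two consecutive indices, substituting $y=x^*\in EP(f,C)$, using pseudomonotonicity of $f$ to discard the nonpositive term $f(y_n,x^*)$, then the Lipschitz-type continuity of $f$ and the Cauchy and triangle inequalities exactly as in the proof of Lemma~\ref{lem1}.ii, yields the quasi-Fej\'er estimate $\|y_{n+1}-x^*\|^2\le\|x_n-x^*\|^2+\epsilon_n$ for every $x^*\in EP(f,C)$. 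Third, since each $C_n$ and $Q_n$ is a halfspace, hence closed and convex, this estimate gives $EP(f,C)\subset C_n$, and an induction based on $x_{n+1}=P_{C_n\cap Q_n}(x_0)$ and the projection characterization (Lemma~\ref{lem.PropertyPC}.iii) gives $EP(f,C)\subset C_n\cap Q_n$ for all $n$, so all projections are well defined.

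Then $x_n=P_{Q_n}(x_0)$ together with Lemma~\ref{lem.PropertyPC}.ii forces $\{\|x_n-x_0\|\}$ to be bounded and nondecreasing, whence $\sum_n\|x_{n+1}-x_n\|^2<\infty$ and $\|x_{n+1}-x_n\|\to 0$; inserting $x_{n+1}\in C_n$ into Lemma~\ref{lem.technique} (with $M_n=\|y_{n+1}-x_{n+1}\|^2$, $N_n=\|x_{n+1}-x_n\|^2+k\|x_n-x_{n-1}\|^2$, $P_n=\|y_n-y_{n-1}\|^2$, and $\alpha,\beta$ read off from the definition of $\epsilon_n$), and using the stated conditions on $\lambda$ and $k$ to guarantee $\alpha>\beta\ge 0$, yields $\|y_{n+1}-x_{n+1}\|\to 0$, hence also $\|y_n-x_n\|\to 0$ and $\|y_{n+1}-y_n\|\to 0$. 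For the closure step, let $p$ be a weak cluster point of the bounded sequence $\{x_n\}$; passing to a subsequence we may assume $x_n\rightharpoonup p$, and then $y_n\rightharpoonup p$ since $\|x_n-y_n\|\to 0$. Rewriting the subgradient inequality as $\lambda\bigl(f(y_n,y)-f(y_n,y_{n+1})\bigr)\ge\langle x_n-y_{n+1},y-y_{n+1}\rangle$ and letting $n\to\infty$, the weak continuity (A3) of $f$, the boundedness of $\{y_n\}$, $\|x_n-y_{n+1}\|\to 0$ and $\lambda>0$ give $f(p,y)\ge 0$ for all $y\in C$, i.e.\ $p\in EP(f,C)$. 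Finally, from $x^\dagger:=P_{EP(f,C)}(x_0)\in Q_n$ and $x_n=P_{Q_n}(x_0)$ one has $\|x_n-x_0\|\le\|x^\dagger-x_0\|$; weak lower semicontinuity of the norm then forces $p=x^\dagger$ and $\|x_n-x_0\|\to\|x^\dagger-x_0\|$, and the Kadec--Klee property of $H$ upgrades $x_n\rightharpoonup x^\dagger$ to $x_n\to x^\dagger$, while $\|y_n-x_n\|\to 0$ gives $y_n\to x^\dagger$.

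There is no genuinely new obstacle here beyond what is already handled in Theorem~\ref{theo.1}: the whole statement is a specialization to $N=1$. The only point deserving a moment's care is the bookkeeping that confirms the parameter ranges declared for Algorithm~\ref{algor3} do imply the inequality $\alpha>\beta\ge 0$ needed to invoke Lemma~\ref{lem.technique} (the same check performed in the proof of Lemma~\ref{lem4}), and remembering that hypothesis (A3) is precisely what licenses passing to the limit in the term $f(y_n,y)$ during the closure argument.
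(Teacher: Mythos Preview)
Your approach is exactly the paper's: the proof of Theorem~\ref{theo.3} there is the single sentence ``Theorem~\ref{theo.3} is directly followed from Theorems~\ref{theo.1} and~\ref{theo2},'' i.e.\ the $N=1$ specialization you describe; your optional self-contained rerun is simply the proofs of Lemma~\ref{lem1}, Lemma~\ref{lem4}, and Theorem~\ref{theo.1} transcribed into the single-bifunction notation.

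One caveat on the point you yourself flagged: the parameter ranges stated for Algorithm~\ref{algor3} in the paper, namely $\lambda\in(0,\tfrac{1}{c_1+c_2})$ and $k>\tfrac{1}{1-\lambda(c_1+c_2)}$, are \emph{wider} than those of Algorithm~\ref{algor1}, which requires $\lambda<\tfrac{1}{2(c_1+c_2)}$ and $k>\tfrac{1}{1-2\lambda(c_1+c_2)}$. The check $\alpha>\beta$ in Lemma~\ref{lem.technique} reads $1-\tfrac{1}{k}>2\lambda(c_1+c_2)$, which Algorithm~\ref{algor3}'s stated condition on $k$ does \emph{not} guarantee when $\lambda\ge\tfrac{1}{2(c_1+c_2)}$. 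So either the ranges in Algorithm~\ref{algor3} are a typo (and should match Algorithm~\ref{algor1}), or the ``bookkeeping'' you mention actually fails on part of the stated range; the paper's one-line proof does not address this, and neither does your proposal beyond noting that it should be checked.
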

%%%%%%%%%%%%%%%%%%%%%%%%%%%%%%%%%%%%%%%%%%
\begin{proof}
Theorem \ref{theo.3} is directly followed from Theorems \ref{theo.1} and \ref{theo2}.
\end{proof}
%%%%%%%%%%%%%%%%%%%%%%%%%%%%%%%%%%
%%%%%%%%%%%%%%%%%%%%%%%%%%%%%%%%%%
\begin{corollary}\cite[Algorithm 1]{MS2015}\label{cor1}
Let $A:C\to H$ be a monotone and $L$ - Lipschitz continuous operator such that $VI(A,C)$ is nonempty. 
Let $\left\{x_n\right\}$ be the sequence generated by the following manner: $x_0=x_1\in H$, $y_0=y_1\in C$ and 
\begin{equation*}\label{eq:}
\left \{
\begin{array}{ll}
y_{n+1}=P_C(x_n-\lambda A(y_n)),\\
C_n=\left\{z\in H:||{y}_{n+1}-z||^2\leq ||x_n-z||^2+\epsilon_n \right\},\\
Q_n=\left\{z\in H: \left\langle x_0-x_n,z-x_n\right\rangle\le 0\right\},\\
x_{n+1}=P_{C_n\cap Q_n}(x_0),
\end{array}
\right.
\end{equation*}
where $\epsilon_n=k||x_n-x_{n-1}||^2+\lambda L||{y}_n-{y}_{n-1}||^2-(1-\frac{1}{k}-\lambda L)||{y}_{n+1}-{y}_{n}||^2$, 
$\lambda\in (0,\frac{1}{L})$ and $k>\frac{1}{1-\lambda L}$. Then, the sequence $\left\{x_n\right\}$ converges strongly to 
$P_{VI(A,C)}(x_0)$.
\end{corollary}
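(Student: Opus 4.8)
The plan is to obtain Corollary~\ref{cor1} as the instance of Theorem~\ref{theo.3} associated with the bifunction $f(x,y)=\left\langle A(x),y-x\right\rangle$ on $C\times C$; equivalently, it is the case $N=1$, $A_1=A$ of Corollary~\ref{cor3}. Since $x^{*}\in EP(f,C)$ means exactly $\left\langle A(x^{*}),y-x^{*}\right\rangle\ge0$ for all $y\in C$, one has $EP(f,C)=VI(A,C)$, so the announced limit $P_{VI(A,C)}(x_0)$ is precisely $P_{EP(f,C)}(x_0)$. It then remains to check that $f$ satisfies $\rm (A1)$--$\rm (A4)$ and that Algorithm~\ref{algor3} applied to this $f$ reproduces the iteration in the statement.

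First I would verify the assumptions. Clearly $f(x,x)=0$; monotonicity of $A$ gives $f(x,y)+f(y,x)=\left\langle A(x)-A(y),y-x\right\rangle\le0$ for all $x,y\in C$, so $f$ is monotone, hence pseudomonotone, which is $\rm (A1)$. For $\rm (A2)$, by Cauchy--Schwarz, the $L$--Lipschitz continuity of $A$, and $2ab\le a^{2}+b^{2}$, for all $x,y,z\in C$,
$$f(x,y)+f(y,z)-f(x,z)=\left\langle A(x)-A(y),y-z\right\rangle\ge-L||x-y||\,||y-z||\ge-\tfrac{L}{2}||x-y||^{2}-\tfrac{L}{2}||y-z||^{2},$$
so $f$ is Lipschitz-type continuous with $c_1=c_2=L/2$. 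Assumption $\rm (A4)$ is immediate since $f(x,\cdot)=\left\langle A(x),\cdot-x\right\rangle$ is affine, hence convex and everywhere subdifferentiable (with subgradient $A(x)$). For $\rm (A3)$ one may simply assume $A$ weakly continuous on $C$; if only $L$--Lipschitz continuity is available, the single appeal to $\rm (A3)$ in the proof of Theorem~\ref{theo.1} must be replaced by a Minty argument: using monotonicity of $A$ in the limiting inequality there one obtains $\left\langle A(y),p-y\right\rangle\le0$ for all $y\in C$, and since $A$ is monotone and (being Lipschitz) hemicontinuous, Minty's lemma gives $p\in VI(A,C)$.

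Next I would identify the iteration. Completing the square in $\lambda\left\langle A(y_n),y-y_n\right\rangle+\tfrac12||x_n-y||^{2}$, exactly as in the proof of Corollary~\ref{cor2}, shows that the minimiser over $y\in C$ equals $P_C(x_n-\lambda A(y_n))$. Substituting $c_1=c_2=L/2$ into the definition of $\epsilon_n$ in Algorithm~\ref{algor3} turns it into $\epsilon_n=k||x_n-x_{n-1}||^{2}+\lambda L||y_n-y_{n-1}||^{2}-(1-\tfrac1k-\lambda L)||y_{n+1}-y_n||^{2}$, while the admissible ranges $\lambda\in(0,\tfrac1{c_1+c_2})=(0,\tfrac1L)$ and $k>\tfrac1{1-\lambda(c_1+c_2)}=\tfrac1{1-\lambda L}$ are exactly those in the statement. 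Hence the scheme of the corollary is the specialisation of Algorithm~\ref{algor3} to $f(x,y)=\left\langle A(x),y-x\right\rangle$, and Theorem~\ref{theo.3} yields $\{x_n\}\to P_{EP(f,C)}(x_0)=P_{VI(A,C)}(x_0)$.

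I do not anticipate a genuine obstacle: all the analytic content is already in Theorem~\ref{theo.1}/Theorem~\ref{theo.3}, and the argument above is essentially bookkeeping plus the observation that a monotone operator is pseudomonotone. The one point that would require care is the gap, in an infinite-dimensional $H$, between weak continuity $\rm (A3)$ of the induced bifunction and mere $L$--Lipschitz continuity of $A$; this is exactly the step handled by the Minty detour described above.
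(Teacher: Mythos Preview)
Your proposal is correct and follows exactly the paper's approach: the paper's proof is the single line ``Using Theorem~\ref{theo.3} for the bifunction $f(x,y)=\left\langle A(x),y-x\right\rangle$ for all $x,y\in C$,'' and you have simply filled in the routine verifications (pseudomonotonicity, $c_1=c_2=L/2$, the argmin being $P_C(x_n-\lambda A(y_n))$, and the matching parameter ranges). Your remark that $\rm (A3)$ may fail for a merely Lipschitz $A$ in infinite dimensions, and that a Minty argument repairs this, is in fact more careful than the paper itself, which does not address the point.
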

%%%%%%%%%%%%%%%%%%%%%%%%%%%%%%%%%
\begin{proof}
Using Theorem \ref{theo.3} for the bifunction $f(x,y)=\left\langle A(x),y-x\right\rangle$ for all $x,y\in C$.
\end{proof}
\begin{remark}
Algorithms $\ref{algor1}$ and $\ref{algor2}$ are the parallel algorithms in the sense that the intermediate approximations 
$y_n^i,~i=1,\ldots,N$ can be simultaneously found at each iterative step and on each subproblem. In fact, we can design the following sequential algorithm 
for CSEP.
\begin{equation}\label{algorSequential}
\left \{
\begin{array}{ll}
x_0=x_1\in H, ~y_0=y_1\in C,\\
y_{n+1}=\underset{y\in C}{\arg\min}\left\{\lambda f_{[n]}(y_n,y)+\frac{1}{2}||x_n-y||^2\right\},\\
C_n=\left\{z\in H:||{y}_{n+1}-z||^2\leq ||x_n-z||^2+\epsilon_n \right\},\\
Q_n=\left\{z\in H: \left\langle x_0-x_n,z-x_n\right\rangle\le 0\right\},\\
x_{n+1}=P_{C_n\cap Q_n}(x_0),
\end{array}
\right.
\end{equation}
where $\epsilon_n=k||x_n-x_{n-1}||^2+2\lambda c_1||{y}_n-{y}_{n-1}||^2-(1-\frac{1}{k}-2\lambda c_2)||{y}_{n+1}-{y}_{n}||^2$, 
$\lambda\in (0,\frac{1}{c_1+c_2})$, $k>\frac{1}{1-\lambda (c_1+c_2)}$ and $[n]=n (mod~N)+1$ is the mod function taking values in 
$\left\{1,2,\ldots,N\right\}$. It is also easy to show that the sequence $\left\{x_n\right\}$ generated by (\ref{algorSequential}) converges 
strongly to $P_F(x_0)$.
\end{remark}

%%%%%%%%%%%%%%%%%%%%%%%%%%%%%%%

\vspace{1.5cm}

\noindent
\textit{\textbf{Dang Van Hieu} is a researcher at Center for High-Performance Computing and is also a lecturer at Department of Mathematics,
Vietnam National University, Hanoi, Vietnam. His area of research includes optimization theory, equilibrium problems and their applications.}

\end{document}